\theoremstyle{plain}
\newtheorem{theorem}{Theorem}[section]
\newtheorem{proposition}[theorem]{Proposition}
\newtheorem{lemma}[theorem]{Lemma}
\newtheorem{corollary}[theorem]{Corollary}
\theoremstyle{definition}
\newcommand{\nc}{\newcommand}
\nc{\dmo}{\DeclareMathOperator}
\nc{\QQ}{\mathbb{Q}}
\nc{\RR}{\mathbb{R}}
\nc{\RP}{\mathbb{RP}^1}
\nc{\ZZ}{\mathbb{Z}}
\nc{\CC}{\mathbb{C}}
\nc{\iso}{\cong}
\dmo{\Mod}{Mod}
\dmo{\Ig}{\mathcal{I}_g}
\dmo{\Span}{span}
\dmo{\Diff}{Diff}
\dmo{\Homeo}{Homeo}
\dmo{\dist}{dist}
\dmo\BDiff{BDiff}
\dmo\SO{SO}
\dmo\slide{sl}
\dmo\im{im}
\dmo\rk{rk}
\dmo\id{id}
\dmo\Id{Id}
\dmo\Fix{Fix}
\dmo\Stab{Stab}
\dmo\Mcg{Mcg}
\dmo{\Hg}{\mathcal{H}_g}
\dmo{\Tg}{\mathcal{T}_g}
\renewcommand{\epsilon}{\varepsilon}
\nc{\coloneq}{\mathrel{\mathop:}\mkern-1.2mu=}
\nc{\margin}[1]{\marginpar{\scriptsize #1}}
\nc{\para}[1]{\bigskip\noindent\textbf{#1}}
\begin{document}
\title{Handlebody Bundles and Polytopes}
\author{Sebastian Hensel and Dawid Kielak}
\date{\today}
\begin{abstract}
  We construct examples of fibered three-manifolds with first Betti number at least $2$ and with fibered faces
  all of whose monodromies extend to a handlebody.
\end{abstract}
\maketitle

\section{Introduction}
\label{sec:intro}

Suppose that $M$ is an orientable three-manifold which fibers over the circle with
fiber a closed connected surface; let $\omega \colon \pi_1(M) \to \ZZ$ denote the induced homomorphism (we will say that $\omega$ is a \emph{fibered class} in the first cohomology of $M$). 
Thurston
\cite{Thurston1986} developed a theory which describes all possible
ways in which $M$ can fiber. Namely, he constructed a convex polytope
$P_M$ in $H^1(M;\mathbb{R})$ such that the fibered classes of $M$ are exactly the integral classes in cones over
certain ``fibered'' faces of $P_M$.

In particular, all the integral classes $\omega'$ in the cone $C_F$ containing the
class $\omega$ 
are also fibered. For
each such $\omega'$, there is an associated \emph{monodromy homeomorphism}
(determined up to isotopy). As all of these monodromies describe the same $3$--manifold,
these elements of mapping class groups \emph{of different surfaces} should share
interesting properties -- but is in general extremely hard to explicitly relate them.

In this article we present a construction of three-manifolds in which
all of these monodromies share the property that they extend to
handlebodies. Namely, we show:
\begin{theorem}\label{thm:intro1}
There exists infinitely many pairwise non-diffeomorphic, closed three-manifolds $M$ with $b_1(M)\geqslant 2$ and with the following property: the Thurston polytope $P_M$ of $M$ contains a fibered face $F$ such that every integral class in the cone $C_F$ over $F$ is fibered, and its associated monodromy extends from the closed surface on which it is defined to a handlebody.
\end{theorem}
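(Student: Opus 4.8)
My plan is to engineer the three-manifold $M$ explicitly as a handlebody bundle over the circle, so that the handlebody-extension property comes built in for *one* monodromy, and then to show that the fibered face $F$ containing the corresponding class has the property that *all* of its integral classes have this property. The natural source of such examples is the following: take a handlebody $V$ of genus $g$ and a homeomorphism $\phi \colon V \to V$ whose restriction $\partial\phi$ to $\Sigma = \partial V$ is pseudo-Anosov (or at least has interesting dynamics on $H_1$); the mapping torus $N_\phi$ of $\partial\phi$ is then a fibered three-manifold, but $N_\phi$ is *not* closed — it is the boundary of the mapping torus $W_\phi$ of $\phi$ acting on $V$, a four-manifold. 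That is not quite what the theorem wants. So instead I would form the *double*: let $M = N_\phi \cup_{\partial} N_\phi$, or more precisely build $M$ as the union of two handlebody bundles glued along their common boundary surface bundle, i.e. a Heegaard-type splitting of $M$ along a fibered surface into two handlebody bundles. Equivalently, one looks for $\phi$ extending over a handlebody on *both* sides of the Heegaard surface. Then the fiber class $\omega$ has monodromy $\partial\phi$, which by construction extends to (two) handlebodies.

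The key point is then to understand the whole fibered cone $C_F$. Here I would use Thurston norm machinery together with Fried's theory of cross-sections to flows. Since $\partial\phi$ is pseudo-Anosov, the flow $\psi$ on $N_\phi$ suspending it is pseudo-Anosov; by Fried, every class in the open cone $C_F$ is dual to a cross-section of $\psi$, and the monodromy of the class $\omega'$ is the first-return map of $\psi$ to that cross-section. So the problem reduces to: show that for the specific $M$ I have built, the suspension flow restricts nicely to the two handlebody-bundle pieces so that every cross-section of the flow on $M$ meets each piece in a cross-section of the induced flow on that handlebody bundle, and that the first-return map to such a cross-section extends over the handlebody. The cleanest way to arrange this is to choose $\phi$ so that the suspension flow on $W_\phi$ (the four-dimensional mapping torus of $\phi$ on $V$) has a product structure near the "core" of the handlebody — e.g. take $\phi$ to be a point-pushing or disk-twist type map, or fix a spine $\Gamma \subset V$ with $\phi(\Gamma) = \Gamma$ and arrange $V$ to be a regular neighbourhood of $\Gamma$ equivariantly. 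Then a cross-section $S \subset N_\phi$ of the boundary flow bounds a "cross-section handlebody" $U \subset W_\phi$ obtained by flowing, and $\partial U \supset S$ exhibits the first-return monodromy on $S$ as extending over $U$.

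Concretely, I expect the construction to go: (1) choose a graph $\Gamma$ and a graph automorphism realized by a homeomorphism of a genus-$g$ handlebody $V$ with spine $\Gamma$, chosen so its boundary restriction is pseudo-Anosov and has $b_1$ at least $2$ on the mapping torus — this forces $\operatorname{rk} H_1(N_\phi) \geq 3$, which one arranges by taking $\phi$ acting with a suitable invariant subspace in $H_1(\Sigma)$; (2) double along the boundary, producing closed $M$ with a Heegaard splitting into two copies of $W_\phi$ and with $b_1(M) \geq 2$; (3) identify the fibered face $F$ of $P_M$ containing the doubled fibration, using that the Thurston norm is detected by the fiber and that the doubling glues the two Thurston polytopes along a face; (4) for an arbitrary integral $\omega' \in C_F$, invoke Fried to get a cross-section $S'$ of the suspension flow, intersect it with each handlebody-bundle half to get $S' \cap W_\phi$, and check this is a spine-compatible cross-section whose first-return map extends over a handlebody obtained by flowing $S' \cap W_\phi$ inward to $\Gamma$.

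The main obstacle I anticipate is step (4): it is not automatic that an *arbitrary* cross-section $S'$ of the flow on $M$ restricts to a cross-section of the flow on each handlebody-bundle piece, nor that its first-return map extends over a handlebody rather than some more complicated compression body. Making this work will require carefully choosing $\phi$ (and the splitting) so that the suspension flow is "adapted" to the handlebody structure on both sides simultaneously — likely by taking $\phi$ to preserve a *product* neighbourhood of the spine and choosing the Heegaard surface to be transverse to the flow in a controlled way — and then a homological argument (using that classes in $C_F$ pair positively with the flow and restrict to fibered classes on each half, which one checks on $H^1$) to conclude that the restricted monodromy is a composition of twists and a pseudo-Anosov piece all of which visibly extend. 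I would also need to verify genuine infinitude and pairwise non-diffeomorphism in step (2), which should follow by varying $g$ and the combinatorial type of $\Gamma$ and distinguishing the resulting $M$ by, say, their Thurston norms or hyperbolic volumes.
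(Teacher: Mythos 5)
Your construction step already goes wrong before the main difficulty. If $\phi$ is a homeomorphism of the handlebody $V$ and $\partial\phi$ its restriction to the \emph{closed} surface $\Sigma=\partial V$, then the mapping torus $N_\phi$ of $\partial\phi$ is already a closed three-manifold (it is exactly $\partial W_\phi$), so the stated reason for doubling --- that $N_\phi$ is not closed --- is false. Moreover the proposed ``double'' does not typecheck: two handlebody bundles over the circle are four-manifolds, and gluing them along their common boundary surface bundle yields a closed \emph{four}-manifold, not a three-manifold; likewise a fiber of a fibration $M\to S^1$ is non-separating, so it cannot serve as a Heegaard-type splitting surface. The correct (and much simpler) choice is $M=\partial W_\phi$ itself, with the class $\omega$ of the boundary fibration; what then has to be engineered is not a splitting but a homological condition, namely that the inclusion $M\hookrightarrow W_\phi$ induces an isomorphism $H_1(M;\mathbb{Q})\to H_1(W_\phi;\mathbb{Q})$ (the paper arranges this by an explicit block-matrix computation realised by disk twists, and gets infinitude and $b_1\geqslant 2$ by composing with pseudo-Anosov elements acting trivially on homology and lying in the kernel of $\mathcal{H}_g\to\mathrm{Out}(F_g)$, plus Brock's volume estimates). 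Your proposal never isolates such a condition, yet without it an arbitrary class $\omega'\in C_F$ need not even descend to a class on $\pi_1(W_\phi)$, so there is nothing on the four-manifold side to compare it with.

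The decisive gap is exactly the one you flag in step (4) and then do not close. Fried's theory does give, for each integral $\omega'\in C_F$, a cross-section of the suspension flow whose first-return map is the monodromy; but nothing in a flow-theoretic argument tells you why that monodromy extends over a \emph{handlebody} rather than, as you say, a compression body or nothing at all --- ``flowing inward to an invariant spine'' has no mechanism to control an arbitrary cross-section. The paper's resolution is algebraic and quantitative: one passes to the free-by-cyclic group $\Gamma=\pi_1(W)$, notes that $\omega'$ pushes forward to $\omega'_\Gamma$ with finitely generated (hence free) kernel onto which the new fiber group $\pi_1(S_h)$ surjects, and then must show this free group has rank \emph{exactly} $h$; only then does the co-rank theorem for surface groups identify $\ker(\pi_1(S_h)\to F_h)$ with the kernel of a handlebody inclusion and extend the monodromy. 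The rank equality is the heart of the matter, and it is proved by comparing two piecewise-linear functionals that are linear on the relevant cones: half the Thurston norm $N$ on $H^1(M;\mathbb{R})$ and the thickness function $T$ of the $L^2$-torsion polytope of $\Gamma$ (Kielak's polytope theorem for free-by-cyclic groups), with the inequality $T\leqslant N$ on fibered classes supplied by the co-rank bound; a rank drop would force $N(\hat\iota^*v)<T(v)$ for $v=\omega_\Gamma-\omega'_\Gamma$ and hence a strict self-contradiction $T(\omega''_\Gamma)\leqslant N(\omega'')<T(\omega''_\Gamma)$ at a nearby rational class. This polytope comparison --- and the full-compatibility hypothesis that makes it available --- is the missing idea in your proposal; without it the approach stalls exactly where you predicted it would.
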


The proof of this theorem relies on a connection of handlebody bundles
to free-by-cyclic groups; the latter have recently been studied
in analogy to fibered three-manifolds, see e.g.~\cites{Dowdalletal2015,Dowdalletal2017a,Dowdalletal2017,FunkeKielak2018,Kielak2018a,Mutanguha2019}.
 Formally, \cref{thm:intro1} follows from \cref{thm:all-handlebody,free-by-cyclic} below.

To elucidate the connection, we need the following definition.  We say
that a class $\omega$ is \emph{compatible with a handlebody bundle} if
$\omega$ is induced by $M$ fibering over the circle with monodromy a
mapping class $\varphi$ of some closed surface $S_g$, such that
$\varphi$ extends to a mapping class of a handlebody $V_g$. We say
that $\omega$ is \emph{fully compatible with a handlebody bundle} if
in addition the inclusion map $M \hookrightarrow W$ induces an
isomorphism $H_1(M;\QQ) \cong H_1(W;\QQ)$
where $W$ denotes the fibered four-manifold whose monodromy is the extension of $\varphi$ to $V_g$.

The fundamental group of $W$ is a
free-by-cyclic group $\Gamma = \pi_1(W)$ fitting into the following commutative diagram:
\begin{center}
  \begin{tikzcd}
    1 \arrow[r] & \pi_1(S_g) \arrow[r]\arrow[d, "\iota"] & \pi_1(M) \arrow[r,"\omega"]\arrow[d, "\hat{\iota}"] & \ZZ \arrow[r]\arrow[d, "="] & 1 \\
    1 \arrow[r] & \pi_1(V_g) \arrow[r] & \Gamma \arrow[r,"\omega_\Gamma"] & \ZZ \arrow[r] & 1 \\
  \end{tikzcd}
\end{center}
where $\omega_\Gamma$ is induced by $\omega$, and where $\iota$ and $\hat \iota$ are epimorphisms induced by the embeddings $S_g \hookrightarrow V_g$ and $M \hookrightarrow W$.

In recent work \cite{Kielak2018a}, the second author
constructed a convex polytope $P_\Gamma$ which serves as an analogue of the
Thurston polytope $P_M$, classifying fiberings of $\Gamma$, i.e.  maps
$\Gamma\to\mathbb{Z}$ with finitely generated kernel.

With this terminology, our main result is:
\begin{theorem}\label{thm:intro2}
\label{thm:all-handlebody}
  Let $M$ be a closed three-manifold, and let $\omega \in H^1(M;\ZZ)$
  be fully compatible with a handlebody bundle.
  If $F$ denotes the fibered face whose cone $C_F$ contains $\omega$, then every integral class $\omega' \in C_F$ is fully compatible with some handlebody bundle.
%
\end{theorem}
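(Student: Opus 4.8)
Throughout I use the notation of the statement and of the diagram that precedes it, and write $N\coloneq\ker\hat\iota\trianglelefteq\pi_1(M)$; the hypothesis makes $\hat\iota^*\colon H^1(\Gamma;\RR)\to H^1(M;\RR)$ an isomorphism and $N$ rationally trivial in $H_1(M;\QQ)$. Fix an integral class $\omega'\in C_F$; by Thurston it is fibered, say with closed fiber $S_{g'}$ and monodromy $\varphi'$. Since $\omega'$ factors through $H_1(M;\ZZ)$ and kills $N$, it descends to a homomorphism $\bar\omega'\colon\Gamma\to\ZZ$ with $\bar\omega'\circ\hat\iota=\omega'$. As $N\leqslant\ker\omega'=\pi_1(S_{g'})$ and $N$ is normal in $\pi_1(M)$, the group $\ker\bar\omega'=\hat\iota(\pi_1(S_{g'}))=\pi_1(S_{g'})/N$ is finitely generated, being a quotient of a surface group, so $\bar\omega'$ is an algebraic fibering of $\Gamma$; since $\Gamma$ is free-by-cyclic, $\ker\bar\omega'$ is free. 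Letting $\omega'$ range over the integral points of $C_F$, the classes $\bar\omega'$ fill a full-dimensional rational subcone $(\hat\iota^*)^{-1}(C_F)$ of algebraic fiberings of $\Gamma$ containing $\omega_\Gamma$; since the cones over the fibered faces of $P_\Gamma$ are open and their union is exactly the set of algebraic fiberings, and $\hat\iota^*$ is a linear isomorphism, this subcone lies in the single fibered cone $C_\Gamma$ of $P_\Gamma$ containing $\omega_\Gamma$. Finally $\varphi'_*$ preserves $N$ (it is conjugation by a lift of a generator of $\ZZ$, and $N$ is normal in $\pi_1(M)$), so it descends to a class $\psi'\in\mathrm{Out}(\ker\bar\omega')$ and $\Gamma\cong\ker\bar\omega'\rtimes_{\psi'}\ZZ$.

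The crucial point is to identify $\operatorname{rank}(\ker\bar\omega')$ with $g'$; this is where the word \emph{fully} in the hypothesis is used, and I expect it to be the main obstacle. I would compare the Thurston norm $\|\cdot\|_T$ on $C_F$ with the norm $\|\cdot\|_{P_\Gamma}$ coming from $P_\Gamma$ on $C_\Gamma$ — both linear on their fibered cones — and show that $\hat\iota^*$ identifies them. One inequality is soft: $\hat\iota$ carries $\pi_1(S_{g'})$ onto $\ker\bar\omega'$, and a surjection from a genus-$g'$ surface group onto a free group of rank $r$ forces $r\leqslant g'$ (the image of its $H^1$ in $H^1(S_{g'};\QQ)$ is isotropic for the cup product, hence of dimension $\leqslant g'$), so $\|\bar\omega'\|_{P_\Gamma}=2(r-1)\leqslant 2g'-2=\|\omega'\|_T$. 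The reverse inequality is the real content; it must use the hypothesis that $\hat\iota$ induces an isomorphism $H_1(M;\QQ)\cong H_1(\Gamma;\QQ)$ in an essential way — for instance via the torsion-polytope descriptions of $P_\Gamma$ and of $P_M$, pulling a norm-realizing certificate for $\bar\omega'$ on $\Gamma$ back to a dual surface in $M$ of no larger complexity. (Alternatively, one could try to show directly that $W$ itself re-fibers over $S^1$ via $\bar\omega'$; the fiber would then be a compact $3$-manifold with free fundamental group, hence a handlebody, and comparing the Euler characteristic of that fiber with that of its boundary in $\partial W=M$ would give $r=g'$ at once.) Since the two linear functionals agree at $\omega_\Gamma\leftrightarrow\omega$ — where $\ker\omega_\Gamma=\pi_1(V_g)$ is free of rank $g$ and the $\omega$-fiber has genus $g$ — the two inequalities force $\|\bar\omega'\|_{P_\Gamma}=\|\omega'\|_T$, whence $\operatorname{rank}(\ker\bar\omega')=\tfrac12\|\omega'\|_T+1=g'$.

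It then remains to realize the situation geometrically. The surjection $\pi_1(S_{g'})\twoheadrightarrow\ker\bar\omega'$ is onto a free group whose rank equals the genus of $S_{g'}$, and I would invoke the fact that such a surjection is induced by a handlebody: there is a handlebody $V_{g'}$ with $\partial V_{g'}=S_{g'}$ and $\ker(\pi_1(S_{g'})\to\pi_1(V_{g'}))=N$. Since $\varphi'_*$ preserves $N$ and the handlebody group is precisely the stabilizer in $\Mcg(S_{g'})$ of a handlebody subgroup, $\varphi'$ extends to a mapping class $\Phi'$ of $V_{g'}$. Let $W'$ be the mapping torus of $\Phi'$: this is a handlebody bundle over the circle with $\partial W'=M$, the bundle map restricting on the boundary to the $\omega'$-fibration of $M$, and by van Kampen $\pi_1(W')=\pi_1(M)/N=\Gamma$, with the inclusion $M=\partial W'\hookrightarrow W'$ inducing precisely $\hat\iota$. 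Hence $\hat\iota$ induces $H_1(M;\QQ)\cong H_1(\Gamma;\QQ)=H_1(W';\QQ)$, which is the isomorphism required in the definition of full compatibility, so $\omega'$ is fully compatible with the handlebody bundle $W'$. The only genuinely hard input is the norm identification of the second paragraph.
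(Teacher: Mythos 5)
Your skeleton matches the paper's proof up to the decisive step: you descend $\omega'$ to an epimorphism $\bar\omega'\colon\Gamma\to\ZZ$ using the rational $H_1$-isomorphism, observe that $\ker\bar\omega'$ is a finitely generated (hence free) quotient of $\pi_1(S_{g'})$, invoke the co-rank inequality $r\leqslant g'$, and — assuming $r=g'$ — use the equality case of the co-rank theorem plus invariance of the kernel under the monodromy to extend $\varphi'$ to a handlebody $V_{g'}$ and verify full compatibility of the resulting mapping torus (your van Kampen check of the ``fully'' part is fine, and if anything more explicit than the paper). But the step you yourself flag as ``the real content'' — that $r\geqslant g'$, equivalently that $\hat\iota^*$ identifies the two linear functionals on the cones — is exactly the paper's key Claim, and you do not prove it: you only gesture at pulling back a norm-realizing certificate through the torsion polytope, or at re-fibering $W$ directly (the latter is essentially the conclusion you are trying to establish). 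So as written there is a genuine gap at the central point of the theorem.

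What is worth noting is that the ingredients you have already assembled close this gap, and this is precisely how the paper argues; no new ``hard input'' is needed. Suppose $r<g'$ and set $v=\omega_\Gamma-\omega'_\Gamma$, so $\hat\iota^*v=\omega-\omega'$. Let $N$ be half the Thurston norm, linear on $C_F$, and $T$ the thickness function of Theorem~\ref{thm:polytope}, linear near $\omega_\Gamma$; they agree at the base point, $N(\omega)=g-1=T(\omega_\Gamma)$, and the assumption $r<g'$ says $T(v)=g-r>g-g'=N(\hat\iota^*v)$. Now perturb \emph{away} from $\omega'$: for a small rational $q>0$ the class $\omega''=\omega+q\,\hat\iota^*v$ still lies in the open cone $C_F$ (openness of the fibered face is used exactly here), so a suitable integral multiple $k\omega''$ is fibered, and your ``soft'' co-rank inequality applied to it gives $T(\omega''_\Gamma)\leqslant N(\omega'')$; on the other hand linearity gives $N(\omega'')=N(\omega)+qN(\hat\iota^*v)<T(\omega_\Gamma)+qT(v)=T(\omega''_\Gamma)$, a contradiction. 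In other words, a linear functional that is dominated by another on (the rational points of) an open cone and agrees with it at an interior point must agree with it identically — the fact you half-invoke when you mention the agreement at $\omega$, but never turn into an argument. Two smaller remarks: your assertion that the whole subcone $(\hat\iota^*)^{-1}(C_F)$ lies in a single fibered cone of $P_\Gamma$ is not justified as stated (connectedness plus openness of the fibered cones does not immediately give this), but it is also not needed — the argument above only uses linearity of $T$ near $\omega_\Gamma$ together with the identity $T=-\chi(\ker\cdot)$ at primitive integral fibered classes, which holds globally; and your factor-of-two normalization of the norm on $P_\Gamma$ is harmless.
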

The condition that the inclusion $M \hookrightarrow W$ should induce an isomorphism
on $H_1$ (required by the definition of full compatibility) is easy to check, and grants us the flexibility to prove the
following application.
\begin{theorem}
\label{free-by-cyclic}
  Suppose that $\Gamma$ is a
  free-by-cyclic group. Then there are infinitely many pairwise non-diffeomorphic,
  hyperbolic three-manifolds admitting fibered classes fully compatible with handlebody bundles with fundamental group $\Gamma$.
\end{theorem}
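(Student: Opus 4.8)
The plan is to realise $\Gamma$ itself as the fundamental group of a handlebody bundle, and then to produce infinitely many pairwise non-diffeomorphic hyperbolic boundary bundles by drilling and refilling a curve in a fibre. Write $\Gamma = F_n \rtimes_\phi \ZZ$; there is nothing to prove for $n \leqslant 1$, so assume $n \geqslant 2$ and identify $F_n$ with $\pi_1$ of a genus-$n$ handlebody $V$. The natural map $\Mcg(V) \to \operatorname{Out}(F_n)$ is surjective — every outer automorphism of a free group is induced by a homeomorphism of the handlebody, as one sees by realising Nielsen generators geometrically — so I would pick a homeomorphism $\Phi'$ of $V$ inducing $\phi$; its mapping torus $W'$ is then a handlebody bundle with $\pi_1(W') \cong \Gamma$ whose boundary is the surface bundle over $S^1$ with monodromy $\Phi'|_{\partial V}$. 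The homeomorphisms of $V$ inducing $\phi$ form a coset of the Luft kernel of $\Mcg(V) \to \operatorname{Out}(F_n)$, which is generated by twists along meridian disks; replacing $\Phi'$ by $\Phi' \circ \tau$ with $\tau$ such a twist leaves $\pi_1(W') \cong \Gamma$ unchanged while changing the boundary monodromy by a product of Dehn twists along meridian curves. I would use this freedom to arrange, in succession, full compatibility and hyperbolicity.

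For full compatibility, observe first that since $H_1(\partial V;\QQ) \to H_1(V;\QQ)$ is onto, the inclusion $M \hookrightarrow W$ of any boundary surface bundle into its handlebody bundle is surjective on $H_1(-;\QQ)$, and by comparison of Wang sequences it is an isomorphism exactly when $b_1(M) = b_1(\Gamma)$. A diagram chase on the monodromy-equivariant sequence $0 \to L \to H_1(\partial V;\QQ) \to H_1(V;\QQ) \to 0$, with $L$ the Lagrangian kernel, shows that this amounts to nondegeneracy of the connecting homomorphism, which is a bilinear form on $\ker(\phi_{\mathrm{ab}} - 1)$. (When $b_1(\Gamma) = 1$ there is nothing to do, since the eigenvalues of the boundary action on $H_1(\partial V;\QQ)$ are those of $\phi_{\mathrm{ab}}$ together with their reciprocals, none of which is $1$.) Composing $\Phi'$ with a meridian twist $\tau_d$ perturbs this form by the symmetric rank-one form determined by $[d] \in L$; since the classes of simple meridians realise every primitive element of $L$, meridian twists realise all symmetric forms, and an arbitrary bilinear form becomes nondegenerate after adding a suitable symmetric one. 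Hence I can choose $\Phi''$ inducing $\phi$ for which the connecting form is nondegenerate, i.e. for which the boundary bundle is fully compatible with its handlebody bundle, still with fundamental group $\Gamma$.

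For hyperbolicity and the infinitude, I would next replace $\Phi''$ by $\Phi_0 = \Phi'' \circ \rho$, where $\rho$ is a high power of a product of twists along \emph{separating} meridian curves whose union fills $\partial V$ (such curves exist as $n \geqslant 2$), chosen so that the boundary monodromy $\varphi_0$ is pseudo-Anosov — possible by Thurston's construction, or Fathi's theorem, for generic choices and large twisting powers. Twists along separating meridians act trivially on $H_1(\partial V;\QQ)$, so full compatibility is unaffected, and by Thurston's hyperbolisation $M_0 := \partial W_0$ is closed hyperbolic, with a fibred class fully compatible with the handlebody bundle $W_0$, $\pi_1(W_0) \cong \Gamma$. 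Let $c$ be one of the separating meridians involved. For $k \in \ZZ$, let $W_k$ be the mapping torus of $\Phi_0 \circ \tau_c^{\,k}$: the disk twist $\tau_c$ acts trivially on $\pi_1(V)$, so $\pi_1(W_k) \cong \Gamma$, and since $c$ is null-homologous in $\partial V$ the inclusion $M_k := \partial W_k \hookrightarrow W_k$ stays an $H_1(-;\QQ)$-isomorphism; so every $M_k$ carries a fibred class fully compatible with a handlebody bundle with fundamental group $\Gamma$. Finally, $M_k$ is a Dehn filling of the one-cusped manifold $Z := M_0 \setminus \nu(c)$ obtained by drilling $c$, and as $k$ ranges over $\ZZ$ these run over infinitely many distinct filling slopes; one checks $Z$ is hyperbolic, so by Thurston's hyperbolic Dehn surgery theorem all but finitely many $M_k$ are hyperbolic, with volume strictly below and converging to $\operatorname{vol}(Z)$. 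Infinitely many of them therefore have distinct volumes and so, by Mostow rigidity, are pairwise non-diffeomorphic.

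I expect the main obstacle to be the simultaneous engineering in the middle step: meeting the homological requirement (nondegeneracy of the connecting form) and the dynamical requirements (pseudo-Anosov boundary monodromy, and hyperbolicity of the drilled complement $Z$) at once, while staying inside the coset of handlebody homeomorphisms inducing the prescribed $\phi$. What makes this feasible is the separation of roles: the homological correction is made by meridian twists, whose effect on the connecting form is by symmetric forms, whereas the dynamical correction is made by twists along \emph{separating} meridians, which do not affect $H_1$ at all — so the two corrections do not interfere. The remaining technical points are the verification that one can choose the separating filling and drilling curves with the required intersection patterns, and that drilling $c$ from $M_0$ yields a hyperbolic manifold.
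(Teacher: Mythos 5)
Your first two stages essentially reproduce the paper's construction: you use surjectivity of $\mathcal{H}_g\to\mathrm{Out}(F_g)$ to realise $\phi$, and you correct the homological condition $H_1(M;\QQ)\cong H_1(W;\QQ)$ by composing with twists about disk-bounding curves, which change only the upper-right block of the symplectic matrix by symmetric forms and act trivially on $\pi_1(V)$ -- this is exactly the content of Lemmas~\ref{lem:hanaction}, \ref{lem:kernel-realise}, \ref{lem:assumption-1} and \ref{lem:realise}, with your ``nondegeneracy of the connecting form on $\ker(\phi_{\mathrm{ab}}-1)$'' being an equivalent reformulation of the criterion $\im(A-\Id)+B(\ker((A^t)^{-1}-\Id))=L$. (Two small cautions there: composing with a twist changes $B$ to $B+AC$ rather than $B+C$, which is harmless only after restricting to $\ker(A^t-\Id)$ as you implicitly do; and ``nothing to prove for $n\leqslant 1$'' is not right -- for those $\Gamma$ the fiber has genus $\leqslant 1$ and no hyperbolic examples exist, so those cases must simply be excluded, as the paper implicitly does by working with genus $\geqslant 2$.)

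Where you genuinely diverge is the endgame, and that is where the gap lies. The paper composes $\varphi_0$ with powers $\psi^n$ of a fixed element of the kernel of $\mathcal H_g\to\mathrm{Out}(F_g)$ whose boundary restriction is pseudo-Anosov, and distinguishes the resulting closed hyperbolic manifolds by Brock's theorem (volume comparable to Weil--Petersson translation length) plus Mostow. You instead fix one pseudo-Anosov example $M_0$, drill a separating meridian $c$ lying on the fiber, and appeal to Thurston's hyperbolic Dehn surgery on $Z=M_0\setminus\nu(c)$. The pivotal assertion ``one checks $Z$ is hyperbolic'' is not automatic and is not checked: drilling a homotopically essential simple closed curve from a closed hyperbolic manifold can yield a toroidal (hence non-hyperbolic) complement -- e.g.\ if $c$ is a satellite of a geodesic -- and nothing about $c$ being an essential separating meridian on the fiber of a pseudo-Anosov bundle rules this out; $\pi_1$-injectivity of the fiber only excludes the reducible and boundary-compressible degenerations. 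Since both the hyperbolicity of the $M_k$ and the ``volumes converge to $\mathrm{vol}(Z)$ from below'' argument rest entirely on this claim, the infinitude conclusion is unproven as written. (A secondary soft spot: making $\varphi_0$ pseudo-Anosov by a single high-power positive multitwist along filling separating meridians composed with an arbitrary $\Phi''$ needs a precise citation -- Thurston's and Penner's constructions require two multicurves with opposite twist signs, so one must invoke a Fathi-type result carefully.) Note that the machinery you already set up gives the paper's cleaner exit: take $\psi$ a suitable product of twists about separating meridians with pseudo-Anosov boundary restriction, form $\psi^n\varphi_0$, and use Brock plus Mostow; this avoids drilling altogether.
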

Finally, we want to emphasise that the theorem yields a new connection between
mapping classes of surfaces and (outer) automorphisms of free groups; to \emph{every} automorphism we can associate (infinitely many different) mapping classes, which should inherit properties from the free group automorphism.
To our knowledge, this intriguing connection has not yet been explored. 

\subsection*{Acknowledgements} The authors would like to thank the organizers of the ``Moduli Spaces" conference on Ventotene in 2017, where some of this work was conducted.

The second author was supported by the grant KI 1853/3-1 within the Priority Programme 2026 \href{https://www.spp2026.de/}{``Geometry at Infinity''} of the German Science Foundation (DFG).

\section{The Thurston polytope for three-manifolds and free-by-cyclic groups}

Throughout, we will use the notation established in the introduction: $M$ is a closed, connected and oriented three-manifold which fibres over the circle with fiber $S_g$, associated class $\omega \in H^1(M;\ZZ)$ and monodromy $\varphi$.

A group will be called \emph{free-by-cyclic} if it is an extension of a finitely generated free group by $\ZZ$. This is the case for the fundamental group of every handlebody bundle $W$ with which $\omega$ is compatible.

Thurston~\cite{Thurston1986}*{Theorem 5} proved that all the different ways in which a given three-manifold can fiber over the circle are encoded by a polytope, in a way which we will make precise in a moment. The second author gave a new proof of Thurston's theorem in \cite{Kielak2018a}*{Theorem 5.34}, and then extended the result to cover also free-by-cyclic groups \cite{Kielak2018a}*{Theorem 5.29} -- in this latter setting, `fibering over the circle' is interpreted to mean the existence of an epimorphism to $\ZZ$ with a finitely generated kernel.

Note that, thanks to a result of Stallings~\cite{Stallings1962}, an integral cohomology $1$-class $\omega \colon \pi_1(M) \to \ZZ$ of an irreducible three-manifold $M$ is fibered if and only if $\ker \omega$ is finitely generated. (Recall that $\omega$ being fibered means that it is induced by $M$ fibering over the circle.) Moreover, one can remove the assumption of $M$ being irreducible thanks to Perelman's solution of the Poincar\'e conjecture. Hence, the group-theoretic notion of fibering used above coincides with the topological one for three-manifold groups.

It is important to note that if the kernel of $\omega \colon G \to \ZZ$ is finitely generated (that is, if $\omega$ is \emph{fibered}), then $\ker \omega$ is in fact a surface group or a free group if $G$ is a three-manifold group, and a free group if $G$ is a free-by-cyclic group (the latter by \cite{Geogheganetal2001}). In either case, the kernel has a well-defined Euler characteristic, denoted by $\chi(\ker \omega)$.

Before proceeding, let us state some definitions: a \emph{polytope} in a finite-dimensional $\RR$-vector space $V$ denotes the intersection of finitely many halfspaces, and therefore a polytope $P$ must be convex, but need not be compact. Unless explicitly stated to the contrary, we will also require polytopes to be \emph{symmetric}, that is preserved by the map $v \mapsto -v$. Given a face $F \neq \{0\}$ of a polytope, which for us will always mean an open face, we define $C_F$ to be the cone over that face; explicitly, we set
\[                                                                                                                                                                                                                                                                                                                                                                                                                                                                                                                                                                                                                                                                                                                                                                                                                                                                          C_F = \{ \lambda v \mid v \in F, \lambda \in (0,\infty) \}.                                                                                                                                                                                                                                                      \]
When $P = F =\{0\}$ we define $C_F = V$.

\begin{theorem}[{\cites{Thurston1986,Kielak2018a}}]
\label{thm:polytope}
Suppose that $G$ is a three-manifold group or a free-by-cyclic group. 
There exists a
    polytope $P$ in $H^1(G;\RR)$ such that for every epimorphism $\omega\colon G \to \ZZ$ with a finitely generated kernel there exists a face $F$ (the associated \emph{fibered face}) of $P$ with $\omega \in C_F$  such that
    \begin{enumerate}
    \item $F$ is top-dimensional, or equivalently, $C_F$ is open, and
     \item every primitive integral class $\omega' \in H_1(G;\ZZ)$ lying in $C_F$
    has a finitely generated kernel, and
    \item the map $\omega' \mapsto \chi(\ker \omega')$ defined on the primitive integral classes in $C_F$ extends to a linear functional defined on the whole of $C_F$.
    \end{enumerate}
\end{theorem}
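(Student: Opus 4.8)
The plan is to deduce \cref{thm:polytope} from the two results it cites --- Thurston's polytope theorem \cite{Thurston1986}*{Theorem 5} in the three-manifold case and \cite{Kielak2018a}*{Theorem 5.29} in the free-by-cyclic case --- by checking that in each setting the polytope produced there already enjoys the three listed properties. So I would split into the two cases, and in each one extract (1)--(3) from a single underlying object: a piecewise-linear seminorm on $H^1(G;\RR)$ whose unit ball is the desired polytope $P$.

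In the three-manifold case I would take $\|\cdot\|$ to be the Thurston seminorm on $H^1(M;\RR)\cong H_2(M;\RR)$, where $\|\omega\|$ is the minimum of $\chi_-(S)=\sum_i\max(0,-\chi(S_i))$ over embedded surfaces $S$ Poincar\'e dual to $\omega$, and $P$ its unit ball. Thurston proves that $\|\cdot\|$ is a seminorm which is piecewise linear with respect to the integral structure, so $P$ is a (symmetric, convex, possibly unbounded) polytope in the sense used here; that every fibered class lies in the open cone $C_F$ over a top-dimensional open face $F$ of $P$; that every integral class in $C_F$ is again fibered; and that a fibration realizes the norm, i.e. $\|\omega'\|=-\chi(\ker\omega')$ whenever the fiber has negative Euler characteristic. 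The first statement is (1). For (2), ``fibered'' and ``finitely generated kernel'' coincide by Stallings \cite{Stallings1962} together with Perelman (as recalled in the text), so every primitive integral $\omega'\in C_F$ has finitely generated kernel. For (3), I would use that over an \emph{open} face $F$ the polytope $P$ is cut out near $F$ by a single supporting inequality $\langle\cdot,\xi_F\rangle\le 1$, so $\|\cdot\|$ agrees on $C_F$ with the linear functional $\langle\cdot,\xi_F\rangle$; hence $\omega'\mapsto\chi(\ker\omega')=-\|\omega'\|$ is the restriction to the primitive integral classes of the linear functional $-\langle\cdot,\xi_F\rangle$ on $C_F$. (The sphere- and torus-fiber cases, where $\chi(\ker\omega')\geq 0$, are degenerate and do not arise for the hyperbolic manifolds relevant to this paper; one handles them by hand, or by working with $-\chi_-$ of the fiber throughout.)

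In the free-by-cyclic case I would instead take $P=P_\Gamma$ to be the polytope the second author builds in \cite{Kielak2018a} from the Newton polytope of an agrarian/Alexander-type invariant of $\Gamma$, and invoke \cite{Kielak2018a}*{Theorem 5.29}. That theorem is the exact analogue of Thurston's: the map $\omega'\mapsto-\chi(\ker\omega')$ on fibered classes extends to a piecewise-linear seminorm with unit ball $P_\Gamma$, fibered classes are precisely the integral classes in cones over top-dimensional faces, and --- using \cite{Geogheganetal2001} to see that the kernel of a fibered class is free and so has a well-defined Euler characteristic --- the same reading off of (1), (2) and (3) goes through verbatim, with (3) once more coming from the seminorm being linear on each open fibered cone.

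The substance of the theorem is therefore not in this unpacking but in the two cited results, and that is where I expect the real obstacle to lie. For three-manifolds the hard input is that the Thurston norm is rational and piecewise linear and that fiberedness is an open, top-dimensional-face condition --- Thurston's original argument, later streamlined via sutured-manifold hierarchies --- together with the nontrivial fact that a fibration minimizes the norm in its class. For free-by-cyclic groups the hard input is the second author's construction, which trades taut surfaces for an analysis of the Bieri--Neumann--Strebel invariant through $L^2$-/agrarian homology and the polytope attached to it, the key point being the \emph{equality} (not merely the McMullen-type inequality) between the negative Euler characteristic of the kernel of a fibered class and the width of $P_\Gamma$ in that direction. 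Granting those inputs, the proof of \cref{thm:polytope} is exactly the case split above; without them, reproving either would be the actual work.
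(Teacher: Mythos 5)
Your three-manifold half is essentially the paper's argument: take $P$ to be the unit ball of the Thurston norm, get openness of $C_F$ and fiberedness of integral classes in it from Thurston's Theorems 3 and 5, identify $-\chi(\ker\omega')$ with the norm of a fibered class, and use linearity of the norm on the cone over an open face for (3); the Stallings/Perelman translation between ``fibered'' and ``finitely generated kernel'' is also as in the paper. The only thing you wave at that the paper does explicitly is the degenerate case ($G\cong\ZZ$, i.e.\ the $\mathbb S^2\times\mathbb S^1$-type situation), which the paper treats separately with $P=\{0\}$.

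In the free-by-cyclic half, however, you attribute to \cite{Kielak2018a}*{Theorem 5.29} a statement it does not contain, and this is exactly where the paper has to do work. That theorem is not phrased as ``$-\chi(\ker\cdot)$ extends to a seminorm whose unit ball is $P_\Gamma$, with fibered classes the integral points of cones over top-dimensional faces''; it characterises membership of a class (and its negative, supplied via the BNS invariant and \cite{Bierietal1987}*{Theorem B1}) in terms of the class attaining its minimum and maximum on the $L^2$-torsion polytope $P_{L^2}$ at \emph{unique} points. Moreover $P_{L^2}$ is in general not symmetric, so it cannot itself be the unit ball of any seminorm: the paper instead forms the thickness function $T(\omega')=\max_{p,q\in P_{L^2}}|\omega'(p)-\omega'(q)|$, which is a seminorm by \cite{FunkeKielak2018}*{Corollary 3.5}, equals $-\chi(\ker\omega')$ on primitive fibered classes by (the proof of) \cite{FunkeKielak2018}*{Theorem 4.4}, and takes $P$ to be \emph{its} unit ball --- explicitly noting $P\neq P_{L^2}$ in general. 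Properties (1) and (2) are then not ``read off'': one argues that attaining extrema at unique points of $P_{L^2}$ is an open condition, so $T$ is linear near $\omega$, hence the cone $C_F$ has nonempty interior and is open; and $C_F$ is precisely the set of classes with the same unique extremal points $p,q$, so every integral class in it is fibered by Theorem 5.29. You do correctly identify the key quantitative input as the equality of $-\chi(\ker\omega')$ with the width of $P_{L^2}$ in the direction $\omega'$, but the bridge from the unique-extremum characterisation to the fibered-face statement (symmetrisation via $T$, openness of the cone, fiberedness throughout the cone) is missing from your write-up, and it is the actual content of the paper's proof in this case.
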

\begin{proof}
We begin by examining the pathological case of $G = \mathbb Z$. Note that $\mathbb Z$ is at the same time the fundamental group of a closed three-manifold, and  a free-by-cyclic group.

In this case we take $P = \{0\}$. The unique face of $P$ is maximal dimensional, and its cone $V = \mathbb R$ is open. There are exactly two primitive integral classes, and their kernels are the trivial group, which certainly is finitely generated. The functional $v \mapsto |v|$ extends the map $\omega' \mapsto \chi(\ker \omega') = 1$. In what follows we will assume that $G \not \cong \mathbb Z$.

\smallskip

Let us start from the more classical case, in which $G$ is a three-manifold group which is not $\mathbb S^2 \times \mathbb S^1$.
The polytope $P$
above is what is denoted by $B_{x}$ in \cite{Thurston1986}, and is the unit ball of the Thurston norm $x \colon H^1(G;\RR) \to [0,\infty)$. 
The Thurston norm $x(\omega')$ of a primitive cohomology class $\omega' \in H^1(G;\ZZ)$ with finitely generated kernel is equal to $- \chi (\ker \omega')$ by definition.
For convenience, we define $N = \frac 1 2 x \colon H^1(G;\RR) \to [0,\infty)$.

It is immediate that $x$, and hence $N$, are linear on $C_F$.

The facts that $C_F$ is open and that every primitive integral class therein is fibered follow from \cite{Thurston1986}*{Theorems 3 and 5}.

\smallskip
Now suppose that $G$ is a free-by-cyclic group, where the free kernel is not trivial.
The starting point is the $L^2$-torsion polytope $P_{L^2} \subseteq H_1(G;\RR)$ appearing in \cite{Kielak2018a}*{Theorem 5.29}, and introduced first by Friedl--L\"uck~\cite{FriedlLueck2017}. Note that $P_{L^2}$ is in general not symmetric. The polytope $P_{L^2}$ induces a \emph{thickness function} $T \colon H^1(G;\RR) \to [0,\infty)$ by setting
\[
 T(\omega') = \max_{p,q \in P_{L^2}} \left\vert \omega'(p) - \omega'(q) \right\vert
\]
In fact, $T$ is a semi-norm by \cite{FunkeKielak2018}*{Corollary 3.5}, and if $\ker \omega'$ is finitely generated and $\omega'$ is primitive, then
\[
 T(\omega') = - \chi(\ker \omega')
\]
 by the proof of \cite{FunkeKielak2018}*{Theorem 4.4} (see also \cite{HennekeKielak2018}*{Theorem 6.2 and Remark 6.5}).

The desired polytope $P$ is defined to be the unit ball of the semi-norm $T$. This immediately implies that $T$ is linear on the cones of the faces of $P$. Note that, in general $P$ is not the same as $P_{L^2}$. In particular, $P$ is symmetric, whereas $P_{L^2}$ does not have to be.

Since $\ker \omega$ is finitely generated, we have $\omega$ and $-\omega$ lying in the (first) Bieri--Neumann--Strebel invariant $\Sigma^1(G)$ by \cite{Bierietal1987}*{Theorem B1}, and therefore \cite{Kielak2018a}*{Theorem 5.29} tells us that there are unique points $p$ and $q \in P_{L^2}$ such that $\omega$ restricted to $P_{L^2}$ attains its minimum at $p$ and maximum at $q$. But this is an open condition, and therefore $T$ is linear on a neighbourhood $U$ of $\omega$. This implies that the cone $C_F$ containing $\omega$ contains $U$, where $F$ is a face of $P$. Hence $C_F$ has non-empty interior. This implies that $F$ is maximal dimensional, and hence $C_F$ is open.

The cone $C_F$ consists of precisely these cohomology classes which, when restricted to $P_{L^2}$, attain their minimum precisely at $p$ and their maximum precisely at $q$. Therefore, every integral class in $C_F$ is fibered by \cite{Kielak2018a}*{Theorem 5.29}.
\end{proof}

\section{All Fiberings are Handlebody}

In this section we assume in addition to the assumptions of the last section that $\omega$ is fully compatible with a handlebody bundle $W$ which fibers with fiber a handlebody $V_g$.
We also let $F$ denote the fibered face of $P_M$, the Thurston polytope, whose cone $C_F$ contains $\omega$.
We set $\Gamma = \pi_1(W)$ as before.

As indicated in the introduction, we have the following diagram with exact rows:
\begin{center}
  \begin{tikzcd}
    1 \arrow[r] & \pi_1(S_g) \arrow[r]\arrow[d, "\iota"] & \pi_1(M) \arrow[r,"\omega"]\arrow[d, "\hat{\iota}"] & \ZZ \arrow[r]\arrow[d, "="] & 1 \\
    1 \arrow[r] & \pi_1(V_g) \arrow[r] & \pi_1(W) \arrow[r,"\omega_\Gamma"] & \ZZ \arrow[r] & 1 \\
  \end{tikzcd}
\end{center}
Here $\iota, \hat{\iota}$ are the maps induced by the inclusions of the boundary. Note that since $\iota$ is
surjective, so is $\hat{\iota}$.

Recall that we are also assuming that
the epimorphism $\hat{\iota}$ induces an isomorphism
\[ \hat{\iota}_*:H_1(M; \QQ) \to H_1(W; \QQ). \]

Let $F_k$ denote the free group of rank $k$.
We need the following ingredient:
\begin{proposition}[Co-rank theorem for surface groups]\label{prop:corank}
  If $f\colon \pi_1(S_g) \to F_k$ is a surjective map, then $k \leqslant g$. In the case of equality,
  the map $f$ is induced by the identification of $S_g$ with the boundary of a genus $g$ handlebody $V_g$.
  Furthermore, if in that case $\psi$ is any mapping class of $S_g$ which preserves $\ker(f)$, then
  $\psi$ has an extension to $V_g$.
\end{proposition}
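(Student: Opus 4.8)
The plan is to reduce all three assertions to the classical co-rank (or "inner rank") theorem for surface groups, which states that a genus-$g$ surface group does not surject onto $F_k$ for $k > g$; this is due to Jaco (and can also be derived from the fact that surjections $\pi_1(S_g)\to F_k$ correspond to compressing a handlebody, or via the characterization of the cut number of a surface). I would cite this for the inequality $k \leqslant g$. The substance of the proposition is the rigidity statement in the equality case, so I will concentrate the argument there.

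First I would set up the topological picture. Given a surjection $f\colon \pi_1(S_g)\to F_k$ with $k=g$, realize $F_g$ as $\pi_1$ of a wedge of $g$ circles, or better as $\pi_1(V_g)$ for a genus-$g$ handlebody $V_g$, and choose a system of $g$ disjoint, non-separating simple closed curves $c_1,\dots,c_g$ on $S_g$ whose classes generate a free factor mapping isomorphically under a chosen splitting. The key claim is that $\ker(f)$ is \emph{generated} by the (normal closure of the) curves $c_1,\dots,c_g$, i.e. that $\ker(f)$ is the kernel of the map $\pi_1(S_g)\to\pi_1(V_g)$ induced by gluing $S_g$ to the boundary of $V_g$ along a homeomorphism carrying the $c_i$ to meridians. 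To see this, note that cutting $S_g$ along $c_1,\dots,c_g$ yields a planar surface (a $2g$-holed sphere), and an Euler characteristic / rank count forces the $c_i$ to form a maximal independent compressible system: any surjection $\pi_1(S_g)\to F_g$ must kill exactly a Lagrangian-type subgroup normally generated by $g$ curves, and a theorem of Griffiths / Zieschang (or the argument via the handlebody group being exactly the stabilizer of the meridian system up to the standard ambiguity) identifies this subgroup up to an automorphism of $\pi_1(S_g)$ and an automorphism of $F_g$. In other words, the pair $(\pi_1(S_g),\ker f)$ is isomorphic to the pair coming from a handlebody boundary, which is precisely the assertion that $f$ is induced by an identification $S_g = \partial V_g$.

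For the last sentence, suppose $\psi \in \Mod(S_g)$ preserves $\ker(f)$. Since $\ker(f)$ is exactly the normal closure of the meridian curves $c_1,\dots,c_g$ (equivalently, the set of isotopy classes of simple closed curves bounding disks in $V_g$ is determined by $\ker f$), the mapping class $\psi$ sends a complete meridian system to another complete meridian system of $V_g$. By the Alexander method / change-of-coordinates principle, any homeomorphism of $S_g$ carrying one complete meridian system to another extends over $V_g$: cut both copies along their meridian disks to get balls and extend radially, or invoke the standard fact that the handlebody group is precisely the subgroup of $\Mod(S_g)$ preserving (the isotopy class of) the meridian system. Hence $\psi$ lies in the handlebody group, i.e. extends to $V_g$, as claimed.

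The main obstacle I expect is the equality-case rigidity: proving that \emph{every} surjection $\pi_1(S_g)\twoheadrightarrow F_g$ has kernel normally generated by a meridian system, as opposed to merely that some such surjection exists. The inequality $k \le g$ and the existence of the handlebody surjection are standard; the uniqueness-up-to-automorphisms requires either citing Zieschang's work on the classification of such epimorphisms (or Stallings' / Griffiths' results on free quotients of surface groups) or giving a direct argument via analyzing how $g$ disjoint curves can generate the kernel — essentially showing that a maximal compression body quotient of $S_g$ onto a free group of maximal rank must "use up" all the handles, leaving no room for further collapsing, which pins down $\ker f$ geometrically.
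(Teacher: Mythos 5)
Your overall strategy coincides with the paper's: the paper proves this proposition purely by citation, quoting the co-rank inequality and the geometricity of maximal-rank epimorphisms $\pi_1(S_g)\twoheadrightarrow F_g$ from the literature (its reference's Lemmas~2.1 and~2.2), and quoting the standard fact that a mapping class preserving $\ker\bigl(\pi_1(\partial V_g)\to\pi_1(V_g)\bigr)$ extends over the handlebody. Your fallback position -- cite Jaco for $k\leqslant g$ and Zieschang/Stallings-type results for the equality case, then use the ``handlebody group is the stabilizer of the disk set'' criterion -- is therefore essentially the same proof, and is acceptable as such.

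However, the \emph{direct} argument you sketch for the equality-case rigidity has a genuine gap, and since you flag it as the heart of the matter it is worth naming precisely. You begin by choosing disjoint simple closed curves $c_1,\dots,c_g$ ``whose classes generate a free factor mapping isomorphically under a chosen splitting,'' and then assert that $\ker(f)$ is the normal closure of these curves. But the whole difficulty is to produce disjoint simple closed curves \emph{lying in} $\ker(f)$ whose normal closure is all of $\ker(f)$; a priori an abstract epimorphism $\pi_1(S_g)\to F_g$ gives you no simple closed curves in its kernel at all, and an Euler characteristic or rank count cannot conjure them. The standard proof realizes $f$ by a map from $S_g$ to a wedge of circles (or graph), makes it transverse to midpoints of edges, and analyzes the resulting system of preimage curves, compressing and counting to show that exactly $g$ of them survive as a meridian system; without some such transversality/compression step your sketch is circular. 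Your final step is fine, but note that the assertion ``the disk set of $V_g$ is determined by $\ker f$'' rests on the Loop Theorem (a simple closed curve on $\partial V_g$ that is null-homotopic in $V_g$ bounds an embedded disk); with that in hand, $\psi$ carries a complete meridian system to a complete meridian system and extends by cutting along disks, which is exactly the standard fact the paper cites.
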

\begin{proof}
  The fact that $k \leqslant g$ is \cite{corank}*{Lemma~2.1}, while the fact
  on the identification with a handlebody is
  \cite{corank}*{Lemma~2.2}. The fact that any mapping class of
  $\partial V_g$ which preserves $\ker(\pi_1(\partial V_g)\to \pi_1(V_g))$
  extends to the handlebody $V$ is standard, see e.g. \cite{survey}*{Corollary~5.11}.
\end{proof}

We are now ready to prove the main theorem.

\begin{proof}[{Proof of Theorem~\ref{thm:all-handlebody}}]
  Let $\omega' \colon \pi_1(M) \to \ZZ$ be an epimorphism lying in the cone $C_F$. Since
  $\pi_1(M) \to \pi_1(W)$ is surjective, and induces an isomorphism $H_1(M;\QQ) \cong H_1(W;\QQ)$,
%
 there is an epimorphism $\omega'_\Gamma$ that makes the right square in the following diagram commute:
\begin{center}
  \begin{tikzcd}
    1 \arrow[r] & \pi_1(S_h) \arrow[r]\arrow[d, "f"] & \pi_1(M) \arrow[r,"\omega'"]\arrow[d, "\hat{\iota}"] & \ZZ \arrow[r]\arrow[d, "="] & 1 \\
    1 \arrow[r] & \ker(\omega_\Gamma') \arrow[r] & \Gamma \arrow[r,"\omega'_\Gamma"] & \ZZ \arrow[r] & 1 \\
  \end{tikzcd}
\end{center}
By a simple diagram chase, a homomorphism $f$ which makes the left square commute exists, and is surjective.
Therefore, $H = \ker(\omega_W')$ is finitely generated. But $\Gamma = \pi_1(W)$ is a free-by-cyclic group, and hence $H$ is a free group by \cite{Geogheganetal2001}.

We now claim that the rank of $H$ is at least $h$. Suppose first that we have shown the claim. Now the co-rank theorem for surface groups (Proposition~\ref{prop:corank}) tell us that the rank is exactly $h$.
Let $x \in \ker f$, and let $z \in \pi_1(M)$ denote some preimage under $\omega'$ of a generator of $\ZZ$. We have
\[
 f(z^{-1} x z) = \hat\iota(z^{-1} x z) = \hat\iota(z^{-1} )\hat\iota( x )\hat\iota( z) = \hat\iota(z^{-1} )f( x )\hat\iota( z) = 1
\]
and so $\ker f$ is preserved by the monodromy induced by $\omega'$ (whose action coincides with  conjugation by $z$).
The second part of the co-rank theorem now gives us a homeomorphism of the
corresponding handlebody $V_h$ with boundary $S_h$
extending the monodromy induced by $\omega'$.

We are left with proving the claim. For a contradiction, suppose that
the rank $g$ of $H$ is strictly smaller than $h$. Write $v = \omega_\Gamma
- \omega_\Gamma'$; we then have $\hat{\iota}^*v = \omega- \omega'$.
Observe that by Theorem~\ref{thm:polytope}, there are
nondegenerate linear functionals $N$ (half of the Thurston norm) and $T$ (the
thickness function), such that
\[ g-h = \frac 1 2 \left( \chi(S_h) - \chi(S_g) \right) = N(\hat{\iota}^*(v))\]
and
\[\quad \rk\big(\ker(\omega_\Gamma)\big) - \rk\big(\ker(\omega_\Gamma')\big) = \chi\big(\ker(\omega_\Gamma')\big) - \chi\big(\ker(\omega_\Gamma)\big) = T(v) \]
Since $g = \rk(\ker(\omega_\Gamma))$, and $\rk(\ker(\omega_\Gamma')) < h$,
this implies
\[ N(\hat{\iota}^*(v)) < T(v). \]
Consider $\omega_\Gamma'' = \omega_\Gamma + q v$ and $\omega'' = \omega + q \hat \iota^* v$ for a small rational number $q$.
Since $q$ is rational, the cohomology class $\omega''$ is also rational, in the sense that $\omega'' \in H^1(M;\QQ)$. There exists a unique positive integer $k$ such that $k \omega''$ is integral and primitive. Also, since $q$ is small, $k \omega''$ lies in the cone of the same fibered face as $\omega$, and hence is a fibered character. Arguing as before using \cref{prop:corank} and \cite{Geogheganetal2001}, we thus have
\[
T(\omega''_\Gamma) =  \frac {-1} k \chi(\ker (k \omega''_\Gamma)) \leqslant \frac {-1} {2k} \chi(\ker (k \omega'')) = N(\omega'')
\]
We also have
\[
 N(\omega'') = N(\omega) + q N(\hat \iota^* v) < T(\omega_\Gamma) + q T(v) = T(\omega''_\Gamma)
\]
and so
\[
 T(\omega''_\Gamma) \leqslant N(\omega'')  < T(\omega''_\Gamma),
\]
which is a contradiction. We have therefore proven the claim.
\end{proof}

\section{Existence of fully compatible fibered classes}
\label{sec:crit}

In this section we show how to construct bundles $M$ that will satisfy the assumption
that there exists an isomorphism $\hat{\iota}_* \colon H_1(M;\QQ) \cong H_1(W;\QQ)$ where $\pi_1(W)$ is a given  free-by-cyclic group. More precisely, we will show the following,
which is a rephrasing of \cref{free-by-cyclic}.
\begin{theorem}\label{thm:manybundles}
  Given any free group automorphism $f:F_g \to F_g$, there are mapping classes
  $\varphi_i$ of the handlebody $V_g$ such that
  \begin{enumerate}[i)]
  \item $\varphi_i$ induces the automorphism $f$ on $\pi_1(V_g) = F_g$
    for all $i$.
  \item The (four-manifolds) $W_i$ obtained as the mapping tori of the mapping classes
    $\varphi_i$ satisfy $\hat{\iota}_* \colon H_1(\partial W_i;\QQ) \cong H_1(W_i;\QQ)$ for all $i$.
  \item The (three-manifolds) $M_i = \partial W_i$ are hyperbolic for all $i$ and are pairwise non-diffeomorphic.
  \end{enumerate}
\end{theorem}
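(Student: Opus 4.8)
The plan is to realise $f$ by \emph{some} handlebody mapping class and then to perturb it inside the twist subgroup. By Luft's theorem the natural map $\Mcg(V_g)\to\operatorname{Out}(F_g)$ is surjective with kernel $\mathcal T$ the subgroup generated by Dehn twists about meridians (boundaries of properly embedded discs), and a meridian twist extends over $V_g$ to a map inducing the identity automorphism of $\pi_1(V_g)$ (since $[\partial D]=1$ there); so composing with elements of $\mathcal T$ affects neither the induced automorphism nor the diffeomorphism type of $V_g$. Fixing $\varphi\in\Mcg(V_g)$ inducing $f$, every $\varphi_i=\varphi\circ\tau_i$ with $\tau_i\in\mathcal T$ then automatically satisfies (i), and the whole problem reduces to choosing the $\tau_i$ so as to also force (ii) and (iii). (Throughout I assume $g\geq2$; the cases $g\leq1$ are degenerate.)

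First I would dispose of (ii) by turning it into linear algebra on $H_1(S_g;\QQ)$. The subspace $L=\ker\big(H_1(S_g;\QQ)\to H_1(V_g;\QQ)\big)$ is a Lagrangian for the intersection form, spanned by meridian classes, and is preserved by every handlebody mapping class; in a symplectic basis adapted to $L$ the action of such a class is block triangular $\left(\begin{smallmatrix}A&B\\0&C\end{smallmatrix}\right)$, where $C$ is the action on $H_1(V_g;\QQ)$ --- equal to $f_*$ --- and, by symplecticity, $A$ is the inverse transpose of $C$, so $\rk(A-\id)=\rk(C-\id)=:r$. Computing $b_1$ from the Wang sequences of $W_i$ and of $M_i=\partial W_i$ gives $b_1(W_i)=1+(g-r)$ and $b_1(M_i)=1+(g-r)+\big(g-\dim(\operatorname{im}(A-\id)+B\cdot\ker(C-\id))\big)$; since $\hat\iota_*$ is automatically surjective on $H_1$, condition (ii) is equivalent to
\[ \operatorname{im}(A-\id)+B\cdot\ker(C-\id)=L. \]
Composing $\varphi$ with a meridian twist $T_\mu$ keeps $A$ and $C$ fixed and adds to $B$ the self-adjoint form $\ell\mapsto\langle\ell,[\mu]\rangle\,[\mu]$; these contributions are additive, and band-summing meridians shows they rationally span $\operatorname{Sym}^2 L$. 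A short argument (choose a nondegenerate self-adjoint form putting $B\cdot\ker(C-\id)$ in general position relative to $\operatorname{im}(A-\id)$) then produces $\tau_0\in\mathcal T$ with $\varphi'=\varphi\tau_0$ satisfying (ii).

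For (iii) I would use that the disc set of $V_g$ has infinite diameter in the curve complex $\mathcal C(S_g)$: choosing meridians $\mu,\nu$ with $d_{\mathcal C(S_g)}(\mu,\nu)\geq3$ makes $\mu\cup\nu$ fill $S_g$, so $\theta=T_\mu T_\nu^{-1}$ restricts to a pseudo-Anosov of $S_g$ while lying in $\mathcal T$. Put $\varphi_i=\varphi'\circ\theta^{n_i}$ for an increasing sequence $n_i$. For $i$ large, $\varphi_i\!\restriction_{S_g}$ is pseudo-Anosov by the usual north--south dynamics argument on $\mathcal{PML}(S_g)$, so $M_i$ is hyperbolic (Thurston); moreover $\theta^{n_i}$ acts trivially on $H_1(V_g)$ and only on the $B$-block of $H_1(S_g;\QQ)$, so $b_1(W_i)$ is constant and the displayed rank condition, holding at $n_i=0$ and being Zariski-open in the exponent, persists --- giving (ii) for all large $i$. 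Finally, the curve-complex translation lengths of $\varphi_i\!\restriction_{S_g}$ tend to infinity, so these monodromies are pairwise non-conjugate; but the Thurston norm of a closed hyperbolic manifold is genuinely a norm, so each of its finitely many fibered cones contains only finitely many integral classes of bounded norm, and $\operatorname{Out}(\pi_1M_i)$ is finite by Mostow rigidity --- hence a fixed closed hyperbolic $3$-manifold carries only finitely many conjugacy classes of genus-$g$ monodromies, so at most finitely many $M_i$ agree with any one of them. Passing to a subsequence makes the $M_i$ pairwise non-diffeomorphic, and all of them satisfy (i)--(iii).

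The step I expect to be the real obstacle is the last one --- extracting infinitely many \emph{distinct} closed hyperbolic manifolds from the family $\{M_i\}$ --- since it is the only place needing substantial $3$-manifold input. An alternative that may be cleaner is to take $\varphi_i=\varphi'\circ T_\mu^{\,n_i}$ for a single meridian $\mu$, so that $M_i$ becomes a Dehn filling of the fixed one-cusped manifold $Q=M_{\varphi'}\smallsetminus(\mu\times\{\ast\})$; once $Q$ is arranged to be hyperbolic (a point still requiring care), Thurston's hyperbolic Dehn surgery theorem forces $\operatorname{vol}(M_i)\nearrow\operatorname{vol}(Q)$, so infinitely many $M_i$ have distinct volumes and are therefore pairwise non-diffeomorphic. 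Everything preceding this final step is elementary, modulo Luft's theorem and the infinite diameter of the disc set.
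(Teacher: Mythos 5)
Your proposal is correct in outline and shares the paper's overall skeleton (realise $f$ by some handlebody class, fix condition (ii) by composing with elements of the kernel of $\mathcal{H}_g\to\mathrm{Out}(F_g)$, then compose with high powers of a pseudo-Anosov element of that kernel to get infinitely many hyperbolic boundaries), but it implements two key steps differently. For (ii), the paper first conjugates $f_*$ into a block form with no eigenvalue $1$ in one block and then writes down an explicit upper block $B$ satisfying $\im(A-\Id)+B(\ker((A^t)^{-1}-\Id))=L$ (its Lemmas 4.4 and 4.2), whereas you argue by general position inside the achievable symmetric perturbations; this works, but note a small imprecision: composing with a meridian twist changes the upper block by $AE_\mu$ (or $E_\mu C$), not by $E_\mu$. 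The fix is one line: for $x,y\in K=\ker(C-\Id)=\ker(A^t-\Id)$ one has $\langle x,AEy\rangle=\langle A^tx,Ey\rangle=\langle x,Ey\rangle$, so on $K$ the perturbations still realise an arbitrary symmetric form, and condition (ii) is exactly nondegeneracy of the pairing $K\times K\to\QQ$, $(x,y)\mapsto\langle x,(B+AE)y\rangle$, which a generic integral symmetric $E$ achieves; your polynomial-in-$n$ ("Zariski-open in the exponent") persistence argument is also fine, and is needed precisely because your $\theta=T_\mu T_\nu^{-1}$ does not act trivially on $H_1(S_g;\QQ)$ — the paper sidesteps this by taking $\psi$ a product of twists about separating meridians, which is homologically trivial, so (ii) holds for all powers with no further argument. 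For (iii), the paper invokes diverging Weil--Petersson translation length, Brock's volume theorem and Mostow rigidity, while you use diverging curve-complex translation length plus the finiteness of genus-$g$ fibrations of a fixed closed hyperbolic manifold (nondegeneracy and compactness of the Thurston norm ball, finitely many fibered faces); your route avoids Brock's theorem at the cost of a slightly longer counting argument, and both are valid. One caveat you share with the paper: the assertion that $\varphi'\theta^{n}$ is pseudo-Anosov (with translation length tending to infinity) for large $n$ requires that $\varphi'$ not interchange the stable and unstable laminations of $\theta$; this is easily arranged by adjusting the choice of $\theta$ (e.g.\ replacing it by a conjugate by a handlebody class), but should be said, in your write-up as in the paper's.
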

Before we can give the proof, we need some basic notation.
Recall that if $S_g$ is a closed surface of
genus $g$, the algebraic intersection number defines a symplectic
pairing
\[  H_1(S_g; \ZZ) \times H_1(S_g; \ZZ) \to \ZZ \]
which extends in the obvious way to
\[ \sigma\colon H_1(S_g; \QQ) \times H_1(S_g; \QQ) \to \QQ \]
on the first homology group. Suppose now that $S_g$ has been identified with the boundary
$\partial V_g$ of a handlebody. Then, the inclusion of the boundary defines a map
\[ \iota_*\colon H_1(S_g;\QQ) \to H_1(V_g;\QQ) \]
whose kernel we denote by $L$. Explicitly, let $\alpha_1, \ldots, \alpha_g$ be disjoint
curves bounding disks which cut the handlebody $V_g$ into a ball. Choose curves $\beta_i$
with the property that $\sigma(\alpha_i, \beta_j)$ is $0$ if $i\neq j$ and $1$ otherwise. Then
the homology classes $a_i, b_j$ defined by the curves $\alpha_i, \beta_j$, respectively, are a basis for
$H_1(S_g;\QQ)$. We then have that
\[ L = \ker(\iota_*) = \langle a_1, \ldots, a_g \rangle. \]
Furthermore, if we define
\[ D = \langle b_1, \ldots, b_g \rangle, \]
then the restriction $\iota_*\colon D \to H_1(V_g;\QQ)$ is an isomorphism. Furthermore, $\sigma$
vanishes identically on $L$ and $D$. In other words, we have
\[ H_1(S_g;\QQ) = L \oplus D, \] and both $L, D$ are Lagrangian
subspaces. With respect to this decomposition, $\sigma$ corresponds to
the matrix
\[ J =
\begin{pmatrix}
  0 & \Id \\ -\Id & 0
\end{pmatrix}. \]
Denote by $\mathcal{H}_g < \mathrm{Mcg}(S_g)$ the \emph{handlebody group}, i.e. the subgroup of those
mapping classes of $S_g$ which extend to $V_g$.
If $\phi$ is an element of the handlebody group, then $\phi_*(L) = L$. This gives the following
obstruction for how the handlebody group acts on homology.
\begin{lemma}[{e.g. \cite{birman-matrix}*{Lemma~2.2}}]\label{lem:hanaction}
  For a symplectic basis as above, every handlebody group element $\phi$ acts
  on $H_1(S_g;\mathbb{Q})$ as a matrix of the form
  \[ \phi_* = \begin{pmatrix}
    A & B \\ 0 & (A^t)^{-1}
  \end{pmatrix}, \]
  where $A$ is invertible, $B$ satisfies $B^t(A^t)^{-1} = A^{-1}B$, and the entries are all integers.
  Conversely, any such matrix is realised as the action of a suitable handlebody group element $\phi$.

	Furthermore, $(A^t)^{-1}$ is the matrix representing the action of $\phi$ on $H_1(V_g, \QQ)$ with respect to the basis $b_1, \ldots, b_g$.
\end{lemma}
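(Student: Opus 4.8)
The plan is to read off the shape of $\phi_*$ from two structural facts --- that $\phi_*$ preserves the Lagrangian $L=\ker\iota_*$, and that $\phi_*$ is a symplectic integer matrix --- and then to prove the converse by exhibiting enough explicit handlebody elements (handle slides, a single--handle flip, and Dehn twists along meridians) whose homology matrices generate the asserted matrix group. For the forward direction: since a representative of $\phi$ extends to a homeomorphism $\Phi$ of $V_g$, the equality $\iota\circ\phi=\Phi\circ\iota$ of maps $S_g\to V_g$ holds on the nose, so $\iota_*\circ\phi_*=\Phi_*\circ\iota_*$ on $H_1(-;\QQ)$; as $\Phi_*$ is invertible this forces $\phi_*(L)=L$, where $L=\ker\iota_*=\langle a_1,\dots,a_g\rangle$. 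In the ordered basis $a_1,\dots,a_g,b_1,\dots,b_g$, which is an integral basis of $H_1(S_g;\ZZ)$, the matrix of $\phi_*$ therefore has vanishing lower--left block and integer entries (because $\phi$ is a homeomorphism), and, $\phi$ being orientation--preserving, it lies in $\mathrm{Sp}(2g,\ZZ)$, i.e. $\phi_*^{t}J\phi_*=J$. Writing $\phi_*=\left(\begin{smallmatrix}A&B\\0&C\end{smallmatrix}\right)$ and expanding $\phi_*^{t}J\phi_*=J$, the upper--right block yields $A^tC=\Id$ --- so $C=(A^t)^{-1}$ and in particular $A$ is invertible --- while the lower--right block yields $B^tC=C^tB$, i.e. $B^t(A^t)^{-1}=A^{-1}B$, which is the asserted shape. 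For the final sentence, $\iota_*$ restricts to an isomorphism $D=\langle b_1,\dots,b_g\rangle\to H_1(V_g;\QQ)$ sending $b_i$ to $\iota_*(b_i)$, and $\Phi_*(\iota_* b_j)=\iota_*(\phi_* b_j)=\sum_i C_{ij}\,\iota_*(b_i)$ since $\iota_*(a_i)=0$; hence $\Phi_*$ on $H_1(V_g;\QQ)$ is represented by $C=(A^t)^{-1}$ in the basis $\iota_*(b_1),\dots,\iota_*(b_g)$.

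For the converse, given integer matrices with $A$ invertible and $B^t(A^t)^{-1}=A^{-1}B$, I would construct $\phi$ as a product of two handlebody elements. First, a handle slide of one handle over another acts on homology by $\left(\begin{smallmatrix}E&0\\0&(E^{t})^{-1}\end{smallmatrix}\right)$ for $E$ an elementary matrix, and a flip of a single handle --- orientation--preserving on $V_g$, as it reverses two coordinates of the corresponding solid--torus summand --- acts by $\left(\begin{smallmatrix}\Delta&0\\0&\Delta\end{smallmatrix}\right)$ with $\Delta=\mathrm{diag}(-1,1,\dots,1)$; since elementary matrices together with $\Delta$ generate $\mathrm{GL}(g,\ZZ)$, and the displayed matrices multiply keeping the off--diagonal block zero, we obtain, for the given $A$, a handlebody element $\phi_0$ with $(\phi_0)_*=\left(\begin{smallmatrix}A&0\\0&(A^t)^{-1}\end{smallmatrix}\right)$. (Alternatively one may invoke the standard surjection $\mathcal{H}_g\twoheadrightarrow\mathrm{Aut}(F_g)$ together with the identification above to realise any $A$.) Second, the Dehn twists along the meridians $\alpha_i$ and along disk--bounding band--sums $\gamma_{ij}$ of $\alpha_i$ and $\alpha_j$ (so that $[\gamma_{ij}]=a_i+a_j$) lie in $\mathcal{H}_g$ because the curves bound disks in $V_g$; they act by matrices $\left(\begin{smallmatrix}\Id&S\\0&\Id\end{smallmatrix}\right)$ with $S$ equal, up to sign, to $e_ie_i^t$ respectively $(e_i+e_j)(e_i+e_j)^t$, and these $S$ generate over $\ZZ$ the lattice of symmetric integer matrices. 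Now set $S_0:=BA^t$; it is integral, and it is symmetric precisely because the hypothesis $B^t(A^t)^{-1}=A^{-1}B$ rearranges to $BA^t=AB^t$. Choosing a product of the above twists $\phi_1$ with $(\phi_1)_*=\left(\begin{smallmatrix}\Id&S_0\\0&\Id\end{smallmatrix}\right)$, we get $(\phi_1\phi_0)_*=\left(\begin{smallmatrix}A&S_0(A^t)^{-1}\\0&(A^t)^{-1}\end{smallmatrix}\right)=\left(\begin{smallmatrix}A&B\\0&(A^t)^{-1}\end{smallmatrix}\right)$, so $\phi=\phi_1\phi_0$ does the job.

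The substance of the argument is the converse: the forward direction is routine linear algebra once one has $\phi_*L=L$ and $\phi_*\in\mathrm{Sp}(2g,\ZZ)$. In the converse, the work is (i) to exhibit handlebody elements realising a generating set of $\mathrm{GL}(g,\ZZ)$ on $L$ with vanishing off--diagonal block --- the single--handle flip realising $\Delta$ is the only mildly delicate point, and it can be avoided altogether through $\mathcal{H}_g\twoheadrightarrow\mathrm{Aut}(F_g)$ --- and (ii) to verify that twists along meridians and their disk--bounding band--sums span every symmetric integer off--diagonal block, which in turn rests on the (standard) fact that the band--summed curves $\gamma_{ij}$ genuinely bound disks in $V_g$. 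All of these are standard features of the handlebody group (cf.\ \cite{survey}), so no real difficulty remains beyond careful bookkeeping of which explicit homeomorphisms to use.
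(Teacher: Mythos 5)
Your argument is correct, and it is worth noting that the paper does not actually prove this lemma at all: it is quoted from the literature (\cite{birman-matrix}*{Lemma~2.2}), so your write-up is a self-contained substitute rather than a variant of an in-paper proof. Your forward direction (preservation of $L=\ker\iota_*$ plus the symplectic condition $\phi_*^tJ\phi_*=J$, and the identification of the lower-right block with the action on $H_1(V_g;\QQ)$ via $\iota_*|_D$) is the standard argument and is complete. Your converse overlaps substantially with what the paper does prove elsewhere: the twist computations for meridians $\alpha_i$ and for disk-bounding band sums in class $a_i+a_j$, generating all unipotent blocks $\left(\begin{smallmatrix}\Id&S\\0&\Id\end{smallmatrix}\right)$ with $S$ symmetric integral, are exactly the content of Lemma~\ref{lem:kernel-realise}; and your parenthetical alternative (use the surjection $\mathcal{H}_g\to\mathrm{Out}(F_g)$ to hit the prescribed lower-right block and then correct the off-diagonal block by such twists) is precisely the strategy used in the proof of Theorem~\ref{thm:manybundles}. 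One small point to tighten: you assert that handle slides and the handle flip act with \emph{exactly} zero off-diagonal block; for a slide this depends on the chosen representative homeomorphism, and a priori one only knows the action has the form $\left(\begin{smallmatrix}E&*\\0&(E^t)^{-1}\end{smallmatrix}\right)$. This is harmless: if your $\phi_0$ realising $A$ has some unknown block $B'$, the needed correction is $S=(B-B')A^t$, which is integral and symmetric because both $BA^t$ (by hypothesis) and $B'A^t$ (by the forward direction applied to $\phi_0$) are symmetric, so the same twist argument finishes the proof; alternatively, say explicitly how the slide homeomorphism is chosen so that the image of $\beta_i$ is again one of the $\beta$-type curves. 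With that sentence added, the proof is complete.
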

We also need the following variant, which is likely well-known to experts.
\begin{lemma}\label{lem:kernel-realise}
  For a basis of $H_1(S_g;\mathbb{Q})$ as above, every integral symplectic matrix of the form
  \[
  \begin{pmatrix}
    \Id & B \\
    0 & \Id
  \end{pmatrix}
  \]
  can be realised as the homology action of a handlebody mapping class which
  acts trivially on the fundamental group of the handlebody.
\end{lemma}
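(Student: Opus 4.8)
First, recall that $\begin{pmatrix}\Id & B\\ 0 & \Id\end{pmatrix}$ is symplectic (for the standard form $J$) exactly when $B$ is symmetric — a one-line check — so the lemma asks for realisations of all symmetric integer matrices $B$. The plan is to realise these as products of homology actions of \emph{disk twists}. Given a properly embedded disk $E \subset V_g$ with $c = \partial E$, fix a product collar $E \times [-1,1] \cong D^{2} \times [-1,1]$ of $E$, meeting $\partial V_g$ in an annular neighbourhood of $c$, and let $\tau_E \colon V_g \to V_g$ be the identity outside the collar and $(x,t) \mapsto (\rho_{\theta(t)}\, x,\, t)$ inside it, where $\rho_\theta$ is the rotation of $D^{2}$ by angle $\theta$ and $\theta \colon [-1,1] \to [0,2\pi]$ is smooth, equal to $0$ near $-1$ and to $2\pi$ near $1$. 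Its restriction to $\partial V_g = S_g$ is the Dehn twist $T_c$, because on the annulus $c \times [-1,1] \subset S_g$ it is the standard twist and it is the identity elsewhere.

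The crucial point — and the only genuinely non-formal step — is that $\tau_E$ induces the \emph{identity} on $\pi_1(V_g)$, not merely on $H_1(V_g)$ (triviality on $H_1(V_g)$ being automatic from \cref{lem:hanaction}). To see this, take a loop in the interior of $V_g$, based away from the collar, and homotope it rel basepoint so that it meets the collar only in vertical fibres $\{x\} \times [-1,1]$: an arc that enters and leaves the collar through the same face $E \times \{-1\}$ or $E \times \{1\}$ can be homotoped off the collar, and the arcs crossing the collar can be straightened. Since $\rho_{\theta(-1)} = \rho_{\theta(1)} = \id$, the map $\tau_E$ carries each such fibre to a spiral arc with the \emph{same} endpoints, hence to a path homotopic to the fibre rel endpoints inside the contractible collar; outside the collar $\tau_E$ does nothing. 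So $(\tau_E)_* = \id$ on $\pi_1(V_g)$. (This is presumably well known for such disk twists.)

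Next I would compute the homology action of $\tau_E$. Since it restricts to $T_c$ on $\partial V_g$, it acts on $H_1(S_g;\QQ)$ by the transvection $v \mapsto v + \sigma(v,[c])\,[c]$ (up to overall sign). When $[c]$ lies in the Lagrangian $L = \langle a_1,\dots,a_g\rangle$ this fixes every $a_j$ — as $\sigma$ vanishes on $L$ — and moves each $b_j$ by a multiple of $[c] \in L$; so writing $[c] = \sum_i m_i a_i$, the matrix of $\tau_E$ is $\begin{pmatrix}\Id & \epsilon\, m m^{t}\\ 0 & \Id\end{pmatrix}$ for some $\epsilon \in \{\pm 1\}$, the sign $\epsilon$ being reversed on passing to $\tau_E^{-1}$; in particular both $m m^{t}$ and $-m m^{t}$ are realised. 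To arrange $[c] = \sum_i m_i a_i$ for a prescribed \emph{primitive} integer vector $m$, I would use \cref{lem:hanaction} to produce a handlebody mapping class $\phi_0$ with homology action $\begin{pmatrix}A & 0\\ 0 & (A^{t})^{-1}\end{pmatrix}$, choosing $A \in \mathrm{GL}_g(\ZZ)$ with first column $m$ (possible since $m$ is primitive); taking $E$ to be the image, under an extension of $\phi_0$ to $V_g$, of a disk bounded by $\alpha_1$, we get $[\partial E] = (\phi_0)_*(a_1) = \sum_i m_i a_i$.

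Finally I would assemble everything by linear algebra over $\ZZ$. Under composition the matrices $\begin{pmatrix}\Id & B\\ 0 & \Id\end{pmatrix}$ with $B$ symmetric integral form a group isomorphic to the additive group of symmetric integer matrices; composing surface mapping classes multiplies their homology actions, and a composition of diffeomorphisms of $V_g$ that each act trivially on $\pi_1(V_g)$ again acts trivially on $\pi_1(V_g)$; hence it suffices to realise a generating set. The rank-one matrices $m m^{t}$ with $m$ primitive already suffice: one has $e_i e_i^{t} = E_{ii}$, and $(e_i + e_j)(e_i + e_j)^{t} - e_i e_i^{t} - e_j e_j^{t} = E_{ij} + E_{ji}$ for $i \neq j$, while $\{E_{ii}\} \cup \{E_{ij} + E_{ji} : i < j\}$ is a $\ZZ$-basis of the symmetric integer matrices. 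Writing the prescribed $B$ as $\sum_{k=1}^{n} \epsilon_k\, m_k m_k^{t}$ with $\epsilon_k \in \{\pm 1\}$ and each $m_k$ primitive (repetitions allowed), the corresponding composition of $n$ disk twists and their inverses is a diffeomorphism of $V_g$ that realises $\begin{pmatrix}\Id & B\\ 0 & \Id\end{pmatrix}$ on $H_1(\partial V_g;\QQ)$ and acts trivially on $\pi_1(V_g)$, as required.
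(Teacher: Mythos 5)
Your proof is correct and follows essentially the same route as the paper: both realise the additive generators of the symmetric integer matrices by Dehn twists about diskbounding curves in the homology classes $a_i$ and $a_i+a_j$, using that such twists extend to $V_g$ and act trivially on $\pi_1(V_g)$. The only differences are cosmetic: you phrase the generators as rank-one transvections $\pm\, m m^{t}$ and obtain the disk in class $a_i+a_j$ by transporting a meridian disk with an auxiliary handlebody mapping class, where the paper simply exhibits such a curve $\delta$ directly, and you spell out the standard fact that disk twists act trivially on $\pi_1(V_g)$, which the paper cites as known.
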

\begin{proof}
  The condition that the matrix is symplectic implies that $B$ has to
  be symmetric. First, let $i$ be given. The twist about $\alpha_i$
  acts as the matrix
  \[
  \begin{pmatrix}
    \Id & E_i \\
    0 & \Id
  \end{pmatrix}
  \]
  where $E_i$ is the matrix which is zero, except a single diagonal
  entry $1$ in column $i$.

  Next, let $i \neq j$ be given. Let $\delta$ be a diskbounding curve
  which intersects each of $\beta_i, \beta_j$ in a single point, and defines
  the homology class $a_i + a_j$. The twist about $\delta$ acts as
  \[
  \begin{pmatrix}
    \Id & E_i+E_j+E_{ij}+E_{ji} \\
    0 & \Id
  \end{pmatrix}
  \]
  where $E_{ij}$ is the elementary matrix with entry $1$ in row $i$,
  column $j$. Since Dehn twists about diskbounding curves extend to
  the handlebody, and their extensions act trivially on the
  fundamental group of the handlebody, the lemma is proved for
  matrices of the form $B=E_i$ and $B=E_{ij}+E_{ji}$. Since these
  (additively) generate the group of integral symmetric matrices, the lemma
  follows.
\end{proof}
To certify that $\hat\iota_* \colon H_1(M_i;\QQ) \cong H_1(W_i;\QQ)$ in the proof
of Theorem~\ref{thm:manybundles}, we will use the following criterion.
\begin{lemma}\label{lem:assumption-1}
  Suppose that $\phi$ is a handlebody group element, and let $A, B$ be the blocks of $\phi_*$ as in Lemma~\ref{lem:hanaction}. If
  \[\im(A-\Id) + B(\ker((A^t)^{-1}-\Id)) = L, \]
  then the handlebody bundle $W$ obtained as the mapping torus of $\phi$ satisfies $\hat\iota_* \colon H_1(M;\QQ) \cong H_1(W;\QQ)$ where $M=\partial W$.
\end{lemma}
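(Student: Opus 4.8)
The plan is to compute the first rational homology of both mapping tori via the Wang sequence and to reduce the statement to the linear-algebra identity in the hypothesis.

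First I would recall that for a self-homeomorphism $f$ of a path-connected space $X$ the mapping torus $T_f$ sits in a Wang exact sequence which, with $\QQ$-coefficients, collapses to a short exact sequence
\[ 0 \to \operatorname{coker}\bigl(f_* - \Id \colon H_1(X;\QQ) \to H_1(X;\QQ)\bigr) \to H_1(T_f;\QQ) \to \QQ \to 0, \]
the right-hand $\QQ$ being $H_0(X;\QQ)$, on which $f_*$ acts trivially. I would apply this to $\phi \colon S_g \to S_g$ (whose mapping torus is $M$) and to its handlebody extension $\hat\phi \colon V_g \to V_g$ (whose mapping torus is $W$). Since $\hat\phi$ extends $\phi$, the boundary inclusion $\iota \colon S_g = \partial V_g \hookrightarrow V_g$ intertwines $\phi$ and $\hat\phi$, so it induces a fibre-preserving map $\hat\iota \colon M \to W$ and a morphism of the two Wang short exact sequences; on the right-hand $\QQ$ this morphism is $\iota_* \colon H_0(S_g;\QQ) \to H_0(V_g;\QQ)$, an isomorphism. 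By the five lemma, $\hat\iota_* \colon H_1(M;\QQ) \to H_1(W;\QQ)$ is an isomorphism if and only if the induced map $\bar\iota_* \colon \operatorname{coker}(\phi_* - \Id) \to \operatorname{coker}(\hat\phi_* - \Id)$ is.

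Next I would translate into the notation of \cref{lem:hanaction}. With respect to $H_1(S_g;\QQ) = L \oplus D$ we have
\[ \phi_* - \Id = \begin{pmatrix} A - \Id & B \\ 0 & C \end{pmatrix}, \qquad C = (A^t)^{-1} - \Id, \]
and, by the last sentence of that lemma, $\iota_*$ is the projection $L \oplus D \twoheadrightarrow D \cong H_1(V_g;\QQ)$ while $\hat\phi_* - \Id$ is identified with $C$. Hence $\bar\iota_*$ is induced by this projection, is automatically surjective, and is an isomorphism exactly when $\dim\operatorname{coker}(\phi_* - \Id) = \dim\operatorname{coker}(C)$, i.e. when $\rk(\phi_* - \Id) = g + \rk(C)$.

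Finally I would invoke the elementary rank identity for block-upper-triangular maps,
\[ \rk\begin{pmatrix} X & Y \\ 0 & Z\end{pmatrix} = \rk(Z) + \dim\bigl(\im(X) + Y(\ker Z)\bigr), \]
with $X = A - \Id$, $Y = B$, $Z = C$, which gives $\rk(\phi_* - \Id) = \rk(C) + \dim\bigl(\im(A-\Id) + B(\ker C)\bigr)$. The required equality then becomes $\dim\bigl(\im(A-\Id) + B(\ker((A^t)^{-1} - \Id))\bigr) = g = \dim L$; since that subspace lies inside $L$, this is precisely the hypothesis $\im(A-\Id) + B(\ker((A^t)^{-1} - \Id)) = L$. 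The only steps that are not purely formal are the naturality of the Wang sequence with respect to the fibre inclusion and the block-rank identity, both of which are routine; the main thing to get right is keeping the identifications of $L$ and $D$ with the relevant homology groups consistent throughout.
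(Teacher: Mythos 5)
Your argument is correct and is essentially the paper's proof: the paper likewise computes $H_1$ of both mapping tori as $\QQ$ plus the cokernel of $\Id-\phi_*$ (respectively $\Id-(A^t)^{-1}$) and compares the two through the $L\oplus D$ decomposition of Lemma~\ref{lem:hanaction}, the only difference being that where you invoke the block-rank identity and a dimension count, the paper checks directly that the hypothesis forces $L\subseteq \im(\Id-\phi_*)$, making the comparison map between the cokernels visibly bijective. Your bookkeeping even yields the slightly stronger conclusion that the hypothesis is equivalent to $\hat\iota_*$ being an isomorphism, but the substance of the argument is the same.
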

\begin{proof}
  We have
  \[ H_1(M; \QQ) = \QQ \oplus \left(H_1(S_g;\QQ)\right)/(\Id-\phi_*) = \QQ \oplus (L \oplus D)/(\Id-\phi_*). \]
 By assumption, we have
 \[
 L =  {\im(A-\Id)} +  {B(\ker((A^t)^{-1}-\Id))}
 \] 
  Hence, the natural map
  \[ \QQ \oplus D/(\Id-(A^t)^{-1}) \to \QQ \oplus (L \oplus D)/(\Id-\phi_*) \]
  is surjective, and it is clearly injective.
  On the other hand, if we denote by $f_*\colon \pi_1(V)\to\pi_1(V)$ the map induced by the
  handlebody group element $\phi$, we have
  \[ H_1(W; \QQ) = \QQ \oplus \big( H_1(V_g;\QQ)/(\Id-f_*) \big) \iso \QQ \oplus D/(\Id-(A^t)^{-1}).\]
  Together, these imply the lemma.
\end{proof}
We are now ready to begin the proof of Theorem~\ref{thm:manybundles} in earnest.
\begin{proof}[Proof of Theorem~\ref{thm:manybundles}]

Let $f\colon F_g \to F_g$ be given. Up to replacing $f$ by a conjugate, we may assume
that $f$ acts on homology as
\[ f_* = \begin{pmatrix}
  \mathrm{Id} & U \\ 0 & V
\end{pmatrix} \] where $V$ does not have any eigenvalue $1$. This follows since
$\mathrm{Out}(F_n) \to \mathrm{GL}_n(\mathbb{Z})$ is surjective, and integral matrices
can be (integrally) conjugated to have this form.

Since the map $\mathcal{H}_g\to\mathrm{Out}(F_g)$ is also surjective
(e.g. \cite{Griffiths-A-surjective}) and the claims in Theorem~\ref{thm:manybundles} are invariant
under replacing $(\varphi_i)$ by $(\psi\varphi_i\psi^{-1})$, it suffices to show
the theorem under this assumption on $f_*$.

\smallskip Now, take a handlebody group element $\phi$ which acts as $f$ on $\pi_1(V)$.
Let $A$ be the matrix satisfying $(A^t)^{-1} = f_*$.
\begin{lemma}\label{lem:realise}
  With notation and assumptions as above, there is a matrix $B$ such that
  \[\im(A-\Id) + B(\ker((A^t)^{-1}-\Id)) = L, \]
and $B^t(A^t)^{-1} = A^{-1}B$.
\end{lemma}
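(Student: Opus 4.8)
The plan is to take the simplest choice that could possibly work, namely $B=A$, and verify the two required properties directly. For the compatibility condition $B^t(A^t)^{-1}=A^{-1}B$, observe that on setting $C=A^{-1}B$ and substituting it becomes $C^t=C$; so the condition holds precisely when $A^{-1}B$ is symmetric. With $B=A$ we get $A^{-1}B=\Id$, which is symmetric, and this part is free. It remains to establish the subspace identity $\im(A-\Id)+B\bigl(\ker((A^t)^{-1}-\Id)\bigr)=L$.

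For this I would use that $A$ is defined by $(A^t)^{-1}=f_*$, which means $A^{-1}=f_*^t$ — the key point being that $A^{-1}$ is the \emph{transpose}, not the inverse, of $f_*$. Since $A$ is invertible, $\im(A-\Id)=A\,\im(\Id-A^{-1})=A\,\im\bigl((\Id-f_*)^t\bigr)$, while $\ker((A^t)^{-1}-\Id)=\ker(\Id-f_*)$ and $B=A$. So, writing $N=\Id-f_*$, the left-hand side of the identity equals $A\bigl(\im(N^t)+\ker N\bigr)$. Now $\im(N^t)$ is the row space of $N$ and $\ker N$ its null space; over $\QQ$ these are orthogonal complements for the standard (positive definite, hence anisotropic) inner product, so $\im(N^t)+\ker N=\QQ^g$. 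As $A$ is invertible this gives $A\,\QQ^g=\QQ^g=L$, which is what is needed. It seems this argument does not even use the hypothesis that $V$ has no eigenvalue $1$; that hypothesis entered only in normalising $f_*$.

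For a cross-check — and to see how that hypothesis could be used if one preferred a more hands-on route — one may instead compute in the block decomposition: writing $g=g_1+g_2$ with $g_1$ the size of the identity block of $f_*=\bigl(\begin{smallmatrix}\Id & U\\ 0 & V\end{smallmatrix}\bigr)$, invertibility of $V-\Id$ gives $\ker(f_*-\Id)=\QQ^{g_1}\oplus 0$, and from $A=\bigl(\begin{smallmatrix}\Id & 0\\ -(V^t)^{-1}U^t & (V^t)^{-1}\end{smallmatrix}\bigr)$ together with invertibility of $(V^t)^{-1}-\Id$ one gets $\im(A-\Id)=0\oplus\QQ^{g_2}$; since $A(\QQ^{g_1}\oplus 0)$ maps onto the first factor, the sum is all of $L$. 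Finally, $A\in\mathrm{GL}_g(\ZZ)$ because $f_*\in\mathrm{GL}_g(\ZZ)$, so $B=A$ is integral with $\det A=\pm1$; this is what will later let one realise $\bigl(\begin{smallmatrix}A & B\\ 0 & (A^t)^{-1}\end{smallmatrix}\bigr)$ by a handlebody mapping class still inducing $f$ on $\pi_1(V_g)$, using Lemma~\ref{lem:hanaction} together with Lemma~\ref{lem:kernel-realise}. I do not expect any genuine obstacle here: the one thing to notice is the transpose in $A^{-1}=f_*^t$, and after that the identity is the elementary fact that the row space and null space of a rational matrix span the ambient space.
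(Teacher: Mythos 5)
Your proof is correct, and it takes a genuinely different route from the paper. The paper uses the normalisation of $f_*$ (the block form with $V-\Id$ invertible, hence $Z-\Id$ invertible for $A^t=\bigl(\begin{smallmatrix}\Id & Y\\ 0 & Z\end{smallmatrix}\bigr)$) to write down the explicit choice $B=\bigl(\begin{smallmatrix}\Id & 0\\ Y^t & 0\end{smallmatrix}\bigr)$, checking the subspace identity and the relation $AB^t=BA^t$ by block computation. You instead take $B=A$, note that the compatibility condition is exactly symmetry of $A^{-1}B=\Id$ (equivalently of $BA^t=AA^t$), and reduce the subspace identity, via $A^{-1}=f_*^t$, to the statement $\im(N^t)+\ker N=\QQ^g$ for $N=\Id-f_*$, which holds for \emph{any} rational square matrix because the standard inner product on $\QQ^g$ is anisotropic (if $x=N^ty$ and $Nx=0$ then $x\cdot x=y\cdot Nx=0$). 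Your verifications, including the second, block-form cross-check ($\ker(f_*-\Id)=\QQ^{g_1}\oplus 0$, $\im(A-\Id)=0\oplus\QQ^{g_2}$), are accurate. What your approach buys: the lemma then needs neither the hypothesis that $V$ has no eigenvalue $1$ nor the preliminary conjugation of $f_*$ into normal form, so that step in the proof of Theorem~\ref{thm:manybundles} could be dropped; and your observation that $B=A\in\mathrm{GL}_g(\ZZ)$ is integral is exactly what is needed later, since the correction matrix fed into Lemma~\ref{lem:kernel-realise} must be an integral symmetric matrix. What the paper's choice buys is a completely explicit, purely mechanical verification inside the normal form it has already set up, with no appeal to anisotropy of the rational inner product; both arguments are short and both produce an integral $B$, so the downstream realisation argument is unaffected.
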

\begin{proof}
  Under the assumptions, we have
  \[ A^t = \begin{pmatrix}
    \mathrm{Id} & Y \\ 0 & Z
  \end{pmatrix} \] where $Z$ is a $k\times k$ matrix such that $Z -
  \Id$ is injective, and $Y$ is a $(g-k)\times k$ matrix. In particular,  $Z-\Id$ is invertible over $\QQ$.

  We then have
  \[ A = \begin{pmatrix}
    \mathrm{Id} & 0 \\ Y^t & Z^t
  \end{pmatrix}, \]
  and observe that thus the image $(A-\Id)(\QQ^{2g})$ is the subspace spanned by the last $k$ basis vectors.
  Put
  \[ B = \begin{pmatrix}
    \mathrm{Id} & 0 \\ Y^t & 0
  \end{pmatrix}. \] Observe that $\ker((A^t)^{-1}-\Id) =
  \ker(A^t-\Id)$, and therefore it is the subspace spanned by the
  first $g-k$ basis vectors. Together, this implies that $\im(A-\Id)$ and
  $B(\ker((A^t)^{-1}-\Id))$ span $L$.
  In other words,  $B$ satisfies
  \[\im(A-\Id) +
  B(\ker((A^t)^{-1}-\Id)) = L, \]
as claimed.

  Furthermore, we have
  \[
  AB^t =
  \begin{pmatrix}
    \mathrm{Id} & Y \\ Y^t & Y^tY
  \end{pmatrix} = BA^t.\qedhere
  \]
\end{proof}
Now, let $B$ be a matrix as given by Lemma~\ref{lem:realise}. Since
$\mathcal{H}_g\to\mathrm{Out}(F_g)$ is surjective, we can find a handlebody group
element $\phi$ mapping to $f$. It then acts on $H_1(S;\mathbb{Q})$ as
\[
 \begin{pmatrix}
    A & B' \\ 0 & (A^t)^{-1}
 \end{pmatrix},
\]
since the lower right block describes the action on the first homology of the handlebody.
Applying Lemma~\ref{lem:kernel-realise}, we can therefore find a handlebody
group element $\varphi_0$ which acts as
\[
 \begin{pmatrix}
    A & B \\ 0 & (A^t)^{-1}
 \end{pmatrix}.
\]
By construction of $B$ and Lemma~\ref{lem:assumption-1}, the mapping
torus $W_0$ defined by $\varphi_0$ satisfies conditions i)~and~ii) of
Theorem~\ref{thm:manybundles}.  Now let $\psi$ be an element of the
kernel of the map
\[ \mathcal{H}_g \to \mathrm{Out}(F_g) \] such that $\psi\vert_{\partial V_g}$
is pseudo-Anosov and such that $\psi$ acts as the identity on
$H_1(\partial V_g;\QQ)$. Such a mapping class can for example be
constructed as the product of two Dehn twists about separating curves
bounding disks.

Observe that for all integers $n$, the mapping tori defined by the elements
$\psi^n\varphi_0$ then also satisfy i) and ii), since
they act on $H_1(S;\mathbb{Q})$ in the same way as $\varphi_0$.  On
the other hand, for large $n$, the elements
$\psi^n\varphi_0\vert_{\partial V_g}$ are pseudo-Anosov with diverging
Weil-Petersson translation length. Thus, the mapping tori defined by the boundary maps
of $\psi^n\varphi_0$ are hyperbolic manifolds, and by the main theorem of
\cite{Brock} their volumes diverge. By Mostow rigidity this implies in
particular that there are infinitely many distinct diffeomorphism
classes in the sequence.

This shows Theorem~\ref{thm:manybundles}.
\end{proof}

\bibliography{hb-polytope}

\bigskip
\noindent Sebastian Hensel \hfill Dawid Kielak\\
\href{mailto:hensel@math.lmu.de?subject=Handlebody bundles and polytopes}{\texttt{hensel@math.lmu.de}} \hfill \href{mailto:dkielak@math.uni-bielefeld.de?subject=Handlebody bundles and polytopes&body=Dear Dawid,}{\texttt{dkielak@math.uni-bielefeld.de}} \\
Mathematisches Institut \hfill Fakultät für Mathematik\\
der Universität München \hfill Universität Bielefeld\\
Theresienstraße 39 \hfill Postfach 100031\\
80333 München \hfill 33501 Bielefeld \\
Germany \hfill Germany

\end{document}